\newtheorem{theorem}{Theorem}[section]
\newtheorem{definition}[theorem]{Definition}
\newtheorem{lemma}[theorem]{Lemma}
\newtheorem{proposition}[theorem]{Proposition}
\newtheorem{corol}[theorem]{Corollary}
\newtheorem{remark}[theorem]{Remark}
\newcommand{\pn}{\par\noindent}
\newcommand{\ov}{\overline}
\newcommand{\rr}{\mathbb{R}}
\newcommand{\hh}{\mathbb{H}}
\newcommand{\pp}{\partial}
\title{\bf  A Cauchy kernel for slice regular functions}
\author{Fabrizio Colombo\\ \normalsize Dipartimento di Matematica, Politecnico di
Milano\\ \normalsize Via Bonardi, 9, 20133 Milano, Italy,
fabrizio.colombo@polimi.it \and Graziano Gentili\\
\normalsize Dipartimento di Matematica, Universit\`a di Firenze, \\
\normalsize Viale Morgagni, 67 A, Firenze, Italy,
gentili@math.unifi.it
\\
\and Irene Sabadini\\ \normalsize Dipartimento di Matematica,
Politecnico di Milano\\ \normalsize Via Bonardi, 9, 20133 Milano,
Italy, irene.sabadini@polimi.it }
\date{August 10, 2008}
\begin{document}
\maketitle
\begin{abstract}
In this paper we show how to construct a regular, non commutative Cauchy kernel for
slice regular quaternionic functions. We prove an (algebraic) representation formula for such functions,
which leads to a new Cauchy formula.
We find the expression of the derivatives of
a regular function in terms of the powers of the Cauchy kernel, and we present several other consequent results.

 \end{abstract}

AMS Classification: 30G35.

{\em Key words}: Non commutative and regular Cauchy kernel, slice
regular quaternionic functions, representation formula for regular functions.

\section{Introduction}
The interest in creating a theory of quaternionic valued functions of
a quaternionic variable, which would somehow resemble the classical theory of holomorphic
functions of one complex variable, has produced several interesting theories. The classical, best known theory is the one due
to Fueter \cite{fueter 1}, \cite{fueter 2} (see also \cite{csss} for recent developments). In recent papers, adopting an approach used by
Cullen, the authors introduced a new theory of regular functions for quaternionic and
Clifford valued functions,  \cite{cs}, \cite{slicecss}, \cite{gs}, \cite{advances}, \cite{gscliffor}.
For these new classes of functions, that will be called slice regular (resp. slice monogenic),
Cauchy representation formulas were introduced in the afore mentioned papers, by means of kernels that
are not slice regular (resp. not slice monogenic).
Cauchy formulas and Cauchy kernels play a key role in the
study of holomorphic functions and of Fueter regular functions. As in the case of
holomorphic functions, Cauchy formulas are a crucial tool in the construction of
functional calculus for slice
regular and slice monogenic functions. In the quaternionic case, the functional calculus
is associated to
quaternionic operators (see for example \cite{cgssann} and \cite{cgss}), while
in the Clifford algebras case it deals with $n$-tuples of
noncommuting operators
(see \cite{cs}, \cite{functionalcss}).
It is therefore clear that particular attention has to be put in the construction of the Cauchy kernels and the Cauchy formulas
for slice regular functions.
In the present paper, we obtain a new Cauchy formula for slice regular functions
in terms of a regular Cauchy kernel, while
the case of slice monogenic functions is treated in \cite{cs}.

Let $\hh$ be the real associative algebra of quaternions
with respect to the basis $\{1, i,j,k \}$
satisfying the relations
$$
i^2=j^2=k^2=-1,
 ij =-ji =k,\
$$
$$
jk =-kj =i ,
 \  ki =-ik =j .
$$
We will write a quaternion $q$ as $q=x_0+ix_1+jx_2+kx_3$ ($x_i\in
\mathbb{R}$) its conjugate as $\bar q=x_0-ix_1-jx_2-kx_3$, and we
will write $|q|^2=q\ov q$. We will also denote  the real part
$x_0$ of $q$ by ${\rm {\rm Re}}(q)$ and its imaginary part $ix_1 + j x_2 + kx_3$ by ${\rm  Im}(q)$.
The symbol
$\mathbb{S}$ will indicate the $2-$sphere of purely imaginary, unit quaternions,
i.e.
$$
\mathbb{S}=\{ q=ix_1+jx_2+kx_3\ |\  x_1^2+x_2^2+x_3^2=1\}
$$
and we will often use the fact  that any non real quaternion $q$ can be written in a unique way as $q=x+yI$ for $x,y\in \mathbb{R}, y>0$ and $I\in \mathbb{S}$. In particular we will set
$$
I_q=\left\{\begin{array}{l}
\displaystyle\frac{{\rm Im}(q)}{|{\rm Im}(q)|}\quad{\rm if}\ {\rm Im}(q)\not=0\\
{\rm any\ element\ of\ } \mathbb{S}{\rm\ otherwise}\\
\end{array}
\right.
$$

Now, to better explain the problem we deal with, we first of all take the case of the regular functions in the sense of Fueter. Consider the
the (left)  Cauchy--Fueter operator
\begin{equation}\label{Fueter}
{{\pp}\over{\pp\bar q}}={\pp\over{\pp x_0}}+i {\pp\over{\pp x_1}}+j
{\pp\over{\pp x_2}}+k {\pp\over{\pp x_3}}
\end{equation}
If $U$ is an open set of $\mathbb{H}$, then a real differentiable function $f: U \to \mathbb{H}$ is called  (left) Fueter regular if $$
{{\pp}\over{\pp\bar q}}f(q)=0.
$$
for all $q\in U$. By writing the units on the right in (\ref{Fueter}), one obtains the so called right Cauchy-Fueter operator whose kernel gives the right Fueter regular functions. The two theories of left and right Fueter regular functions are completely equivalent.
The function $G(q)$ defined by
\begin{equation}\label{1.1.9}
G(q)={{q^{-1}}\over{|q|^2}}={{\bar q}\over{|q|^4}}
\end{equation}
is called the Cauchy--Fueter kernel, it is a generalization of  the classical Cauchy kernel for holomorphic functions and it is used to find a Cauchy formula. In fact the function $G(q)$ turns out to be left and right Fueter regular on
$\hh\backslash \{0\}$ and we have that: if  $S$ is a four dimensional domain $S\subset U$ whose boundary $\pp S$ is a compact, orientable hypersurface,  and if $q_0$
belongs to the interior of $S$,
then
\begin{equation}\label{1.1.10}
f(q_0)={1\over{2\pi^2}}\int_{\pp S}G(q-q_0 )Dq f(q)
\end{equation}
\par\noindent where, with obvious notations, $Dq$ is the quaternion valued 3-differential form defined by $dx_1\wedge dx_2\wedge dx_3 -idx_0\wedge dx_2\wedge dx_3 -jdx_0\wedge dx_3\wedge dx_1-kdx_0\wedge dx_1\wedge dx_2$.

We will now pass to the case of slice regular functions,  and we will recall the definition  given by the authors in \cite{advances}.

\begin{definition} Let $U\subseteq\hh$ be an open set and let
$f:\ U\to\hh$ be a function. Let $I\in\mathbb{S}$ and let $f_I$ be
the restriction of $f$ to the complex line $L_I := \mathbb{R}+I\mathbb{R}$ passing through $1$ and $I$.
We say that $f$ is a left slice regular (or regular) function if, for every $I\in\mathbb{S}$
$$
\overline{\partial}_I f=\frac{1}{2}\left(\frac{\partial }{\partial x}+I\frac{\partial }{\partial y}\right)f_I(x+Iy)=0,
$$
and we say it is right slice regular (or right regular) if for every $I\in\mathbb{S}$
$$
f \overline{\partial}_I=\frac{1}{2}\left(\frac{\partial }{\partial x}+\frac{\partial }{\partial y}I\right)f_I(x+Iy)=0.
$$
\end{definition}

\noindent We  define the $I-$derivative of $f$ in $q=x+yI$ by
$$\partial_If(x+yI):=\frac{1}{2}\left(\frac{\partial}{\partial x}
-I\frac{\partial}{\partial y}\right)f_I(x+yI)$$
and we are now able to give the following
notion of derivative:
\begin{definition}\label{derivative}
Let $\Omega$ be a domain in $\mathbb{H}$, and let $f:\Omega \to \mathbb{H}$
be a regular function. The slice derivative (in short derivative) of $f$,
$\partial_s f$, is defined as follows:
\begin{displaymath}
\partial_s(f)(x+yI) =
\partial_I(f)(x+yI).
\end{displaymath}
\end{definition}
Notice that the definition of derivative is well posed because it is applied
only to regular functions for which
$$
\frac{\partial}{\partial x}f(x+yI)=
-I\frac{\partial}{\partial y}f(x+yI)\qquad \forall I\in\mathbb{S},
$$
and therefore, analogously to what happens in the complex case,
$$ \partial_s(f)(x+yI) =
\partial_I(f)(x+yI)=\partial_x(f)(x+yI).
$$
Note that if $f$ is a regular function, then its derivative is still regular because
\begin{equation}\label{derivatareg}\overline{\partial}_I(\partial_sf(x+Iy))
=\partial_s(\overline{\partial}_If(x+Iy))=0,\end{equation}
and therefore
$$
\partial^n_sf(x+yI)=\frac{\partial^n
f}{\partial x^n}(x+yI).
$$
\noindent
\par\noindent
For $R>0$, let now $B(0, R)=\{ q\in \mathbb{H} : |q| < R\}$ be the open ball of radius $R$ of $\mathbb{H}$,  let $f:\ B(0,R)\to\hh$ be a slice regular function and let $q=x+yI_q \in B(0,R)$. Then the {\sl Cauchy formula for slice regular quaternionic functions} states that (\cite{advances})
\begin{equation}\label{sliceformul}
f(q)=\frac{1}{2\pi } \int_{\pp\Delta_q(0,r)}
\displaystyle (\zeta -q)^{-1}\, d\zeta_{I_q} \ f(\zeta)
\end{equation}
where $d\zeta_{I_q}=-I_qd\zeta$ and $r>0$ is such that
$
\overline{\Delta_q(0,r)}:=\{x+I_q y\ |\ x^2+y^2\leq r^2\}
$
contains $q$ and is contained in $B(0,R)$.
\par\noindent
It is easy to prove  that the function
$
g(\zeta)=(\zeta -q)^{-1}
$
is not slice regular unless $q\in\mathbb{R}$. It is important at the same time to notice that the function $g$ is effectively used only in the complex plane $L_{I_q}$  which contains the point $q$.
One may wonder if it is possible to consider a variation of the Cauchy formula (\ref{sliceformul}) in which the corresponding kernel is regular and  the path of integration does not depend on the plane $L_{I_q}$ to which the point $q$ belongs.
\par\noindent
The main results in this paper show that both questions have affirmative answers. The key tool used to obtain our results will be
the  function
$$
-(q^2
-2q {\rm Re} [s]+|s|^2)^{-1} (q-\overline s)
$$
which turns out to be the regular inverse $(s-q)^{-*}$ of $R_s(q)=(s-q)$ (see \cite{caterina}).
The function $ (s-q)^{-*}$ is, actually, the unique slice regular extension of
$(s-q)^{-1}$ (in the variable $q$) out of $L_{I_q}$ and
will be called the {\sl noncommutative Cauchy kernel}. In particular, as a
first significant step,  formula (\ref{sliceformul}) can be rewritten in
terms of the noncommutative Cauchy kernel  as
\begin{equation}\label{sliceformul1}
f(q)=\frac{1}{2\pi } \int_{\pp\Delta_q(0,r)}
\displaystyle (\zeta -q)^{-1}\, d\zeta_{I_q} \ f(\zeta)=\frac{1}{2\pi } \int_{\pp\Delta_q(0,r)}
\displaystyle (\zeta -q)^{-*}\, d\zeta_{I_q} \ f(\zeta)
\end{equation}
where, as before, $d\zeta_{I_q}=-I_qd\zeta$ and $r>0$ is such that
$
\overline{\Delta_q(0,r)}:=\{x+I_q y\ |\ x^2+y^2\leq r^2\}
$
contains $q$ and is contained in $B(0,R)$.
The new Cauchy formula that we present in this paper holds naturally for domains - that we will call circular, slice domains - which intersect the real axis and are invariant under the action of purely imaginary rotations in $\mathbb{H}$:

\begin{theorem}
Let $\Omega \subseteq \hh$ be a circular, slice domain  such that $\pp
(\Omega \cap L_I)$ is union of a finite number of rectifiable
Jordan arcs. Let $f$ be a regular function on $\Omega$ and, for
any $I\in \mathbb{S}$,  set  $ds_I=-Ids$. Then for every
$q\in\Omega$ we have:
\begin{equation}\label{integral}
 f(q)=\frac{1}{2 \pi}\int_{\pp (\Omega \cap L_I)} -(q^2-2{\rm Re}(s)q+|s|^2)^{-1}
 (q-\overline{s})ds_I f(s).
\end{equation}
Moreover
the value of the integral depends neither on $\Omega$ nor on the  imaginary unit
$I\in\mathbb{S}$.
\end{theorem}

On circular, slice domains, indeed, the proof of the Cauchy formula is achieved by means of
the following representation formula, which is another crucial result obtained in the paper:

\begin{theorem} Let
$f$ be a regular function on a circular domain $\Omega\subseteq  \mathbb{H}$. Choose any
$J\in \mathbb{S}$.  Then the following equality holds for all $q=x+yI \in \Omega$:
\begin{equation}\label{distribution}
f(x+yI) =\frac{1}{2}\Big[   f(x+yJ)+f(x-yJ)\Big]
+I\frac{1}{2}\Big[ J[f(x-yJ)-f(x+yJ)]\Big].
\end{equation}
\end{theorem}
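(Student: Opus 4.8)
The plan is to define the right--hand side of (\ref{distribution}) as a new function, to prove that it is itself regular and that it coincides with $f$, and then to invoke the identity principle to force equality. Concretely, for $q=x+yI\in\Omega$ set
\begin{equation*}
g(x+yI):=\frac{1}{2}\big[f(x+yJ)+f(x-yJ)\big]+I\,\frac{1}{2}J\big[f(x-yJ)-f(x+yJ)\big].
\end{equation*}
Because $\Omega$ is circular, whenever $x+yI\in\Omega$ the points $x\pm yJ$ also belong to $\Omega$, so $g$ is well defined on $\Omega$; moreover, replacing $(y,I)$ by $(-y,-I)$ leaves the expression unchanged, so there is no ambiguity in the decomposition $q=x+yI$. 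It then suffices to show that $g$ is regular on $\Omega$ and that $g\equiv f$ on the slice $L_J$.

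First I would establish the regularity of $g$. Writing $u(x,y)=f(x+yJ)$ and $v(x,y)=f(x-yJ)$, the restriction of $g$ to each slice has the form $g=\alpha+I\beta$, where $\alpha=\tfrac{1}{2}(u+v)$ and $\beta=\tfrac{1}{2}J(v-u)$ do not depend on $I$. Using $I^2=-1$ one computes
\begin{equation*}
\overline{\partial}_I g=\frac{1}{2}\big(\partial_x g+I\,\partial_y g\big)=\frac{1}{2}\big[(\alpha_x-\beta_y)+I(\beta_x+\alpha_y)\big],
\end{equation*}
so that regularity on every slice $L_I$ reduces to the two identities $\alpha_x=\beta_y$ and $\alpha_y=-\beta_x$. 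These follow precisely from the regularity of $f$ on $L_J$: the condition $\overline{\partial}_J f=0$ gives $u_y=Ju_x$, and differentiating $v(x,y)=f(x-yJ)$ yields $v_y=-Jv_x$; substituting these (and $J^2=-1$) into $\alpha_x,\alpha_y,\beta_x,\beta_y$ verifies both identities. Carrying out this computation while respecting the noncommutativity of the quaternions is the technical heart of the proof, and the main place where care is required.

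Finally I would match the data and conclude. Setting $I=J$, the coefficient $IJ=J^2=-1$ collapses the definition of $g$ to $g(x+yJ)=f(x+yJ)$, so $f$ and $g$ agree on the whole open subset $L_J\cap\Omega$ of the slice $L_J$; in particular, at $y=0$, they agree on the real points of $\Omega$. Since $f$ and $g$ are both regular on $\Omega$ and coincide on a subset of $L_J$ with accumulation points, the identity principle for slice regular functions gives $g\equiv f$ on $\Omega$, which is exactly (\ref{distribution}). The one point to be careful about, besides the noncommutative bookkeeping above, is this appeal to the identity principle: it propagates the agreement from $L_J$ to the other slices through the real axis, and is available in the slice--domain setting in which the formula is used.
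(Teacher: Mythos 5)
Your proposal is correct and takes essentially the same route as the paper: you define the right-hand side as an auxiliary function, verify that it satisfies the slice Cauchy--Riemann equation on every slice $L_I$ by exploiting the regularity of $f$ on $L_J$ (via $u_y=Ju_x$ and $v_y=-Jv_x$), observe that setting $I=J$ makes it collapse to $f$ on $L_J$, and conclude by the identity principle. Your decomposition $g=\alpha+I\beta$ with the two identities $\alpha_x=\beta_y$ and $\alpha_y=-\beta_x$ is just a tidier packaging of the paper's direct computation that $\partial_x\psi=-I\partial_y\psi$, and your closing caveat that the identity principle needs the slice-domain setting matches the implicit assumption in the paper's own proof.
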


We present several other consequent results, and we find the expression of the derivatives of
a regular function in terms of the powers of the regular Cauchy kernel.

\section{The Cauchy formula with regular kernel }
\pn
We will recall here the most salient properties of slice regular functions. When no confusion can arise, we
will refer to left slice regular functions simply as regular functions. When it will be needed we will
specify if we are considering left or right regularity.

\begin{remark}{\rm On a domain $U\subseteq\hh$, left regular functions  form a right
$\mathbb{H}$-vector space and
right regular functions  form a left $\mathbb{H}$-vector space.  It is not true, in general, that the product or the composition of two
(left/right) regular functions is (left/right) regular.}
\end{remark}

\noindent One of the key features of this notion of regularity is the fact that
polynomials $\sum_{n=0}^Nq^na_n$ in the quaternion variable $q$, and with quaternionic
coefficients $a_n$, are left regular (while polynomials $\sum_{n=0}^Na_nq^n$ are right regular).
Moreover, any power series $\sum_{n=0}^{+\infty} q^na_n$ (or more in general
$\sum_{n=0}^{+\infty} (q-p_0)^na_n$, $p_0\in\mathbb{R}$)
 is left regular in
its domain of convergence.
Conversely, every regular function on a open ball with center at the origin
can be represented by a power series. Indeed we have, \cite{advances}:
\begin{theorem}\label{serie} If $B=B(0,R)$ is the open ball centered in the origin with radius $R>0$ and
$f:\ B \to\hh$ is a left regular function, then $f$ has a series expansion of the form
$$
f(q)=\sum_{n=0}^{+\infty} q^n\frac{1}{n!}\frac{\pp^nf}{\pp x^n}(0)
$$
converging on $B$. Analogously, if $f$ is right regular it can be expanded as
$$
f(q)=\sum_{n=0}^{+\infty} \frac{1}{n!}\frac{\pp^nf}{\pp x^n}(0) q^n.
$$
In both cases $f$ is infinitely differentiable on $B$
\end{theorem}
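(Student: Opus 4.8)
The plan is to reduce the quaternionic statement to the classical one–variable complex theory slice by slice, and then to glue the slice-wise expansions using the fact that the Taylor coefficients are computed at the real origin, where all slices meet.

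First I would fix $I\in\si$ and choose $J\in\si$ with $J\perp I$, so that $\{1,I,J,IJ\}$ is a real basis of $\hh$ and, identifying $L_I$ with $\cc$, we have the real direct sum decomposition $\hh=L_I\oplus L_IJ$. By the definition of left regularity the restriction $f_I$ satisfies $\ov{\pp}_I f_I=0$ on the disc $B\cap L_I=\{x+Iy:\ x^2+y^2<R^2\}$, so, writing $f_I=F+GJ$ with $F,G:\ B\cap L_I\to L_I$, the equation $\ov{\pp}_If_I=0$ splits into the holomorphy of $F$ and of $G$ separately (the Splitting Lemma). Thus $F$ and $G$ are genuine $\cc$-valued holomorphic functions on a disc of radius $R$.

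Next, by the classical theory of holomorphic functions of one complex variable, $F$ and $G$ admit power series expansions converging on the whole disc of radius $R$; combining them and putting the powers on the left gives $f_I(x+Iy)=\sum_{n\ge 0}(x+Iy)^n a_n^{(I)}$ with $a_n^{(I)}\in\hh$, and the standard formula identifies the coefficients as $a_n^{(I)}=\frac{1}{n!}\frac{\pp^nf_I}{\pp x^n}(0)$ (recall that on regular functions $\pp_sf=\pp_xf$). The crucial observation is then that these coefficients do not depend on $I$: since $0$ is real and the real axis lies in every $L_I$, the iterated real derivatives $\frac{\pp^nf}{\pp x^n}(0)$ are intrinsic to $f$ and independent of the chosen slice. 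Hence $a_n:=\frac{1}{n!}\frac{\pp^nf}{\pp x^n}(0)$ is a single fixed quaternion for every $n$.

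Finally, because the $a_n$ are the same for all slices and each slice-series converges on the disc of radius $R$, the Cauchy--Hadamard estimate $\limsup_n|a_n|^{1/n}\le 1/R$ holds (slice-independently), so $\sum_{n\ge 0}q^na_n$ converges absolutely for every $q$ with $|q|<R$. Any $q\in B$ lies on the slice $L_{I_q}$, where the expansion coincides with $f_{I_q}$; therefore $f(q)=\sum_{n\ge 0}q^na_n$ on all of $B$. Smoothness is then automatic, since such a power series is left regular and may be differentiated term by term, so $f$ is infinitely differentiable on $B$; the right regular case is entirely analogous, with coefficients written on the left to give $f(q)=\sum_{n\ge 0}a_nq^n$. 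I expect the main obstacle to be the two gluing ingredients, namely establishing the Splitting Lemma so that the $\hh$-valued restriction genuinely decomposes into two $\cc$-valued holomorphic functions, and proving the slice-independence of the coefficients, which is exactly what permits assembling the separate complex expansions into a single quaternionic power series.
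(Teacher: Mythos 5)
Your proof is correct, and it is essentially the intended one: this paper does not actually prove Theorem \ref{serie} but quotes it from \cite{advances}, and the argument there follows precisely your route --- split $f_I = F + GJ$ into two $L_I$-valued holomorphic functions, expand each by classical one-variable complex analysis, observe that the coefficients $\frac{1}{n!}\frac{\partial^n f}{\partial x^n}(0)$ are slice-independent because iterated $x$-derivatives at $0$ involve only the values of $f$ on the real axis, and use multiplicativity of the quaternionic norm to turn the slice-wise Cauchy--Hadamard estimate into convergence of $\sum_{n\geq 0} q^n a_n$ on all of $B(0,R)$. The two ingredients you flag as the main obstacles (the Splitting Lemma and the slice-independence of the coefficients) are exactly the two lemmas the standard proof rests on, and both hold for the reasons you indicate: left multiplication by $I$ preserves the decomposition $\mathbb{H} = L_I \oplus L_I J$, so $\overline{\partial}_I$ acts componentwise, and the $n$-th $x$-derivative at a real point is determined by the restriction of $f$ to the real axis, which is common to all slices.
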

\noindent It is straightforward that an analogous statement holds for regular functions in
an open ball centered in any $p_0\in \mathbb{R}$.
Note that, even though the definition of regular
function involves the direction of the unit quaternion $I$, the coefficients of the series
expansion do not depend upon the choice of $I$.

\noindent
A basic result in the theory of regular functions that we will need in the sequel is
the following version of the identity principle (see
\cite{advances}):
\begin{theorem}[Identity Principle] \label{identity principle} Let $f:B(0,R)\to\mathbb{H}$ be a regular function.
Denote by $Z_f=\{q\in B : f(q)=0\}$ the zero set of $f$. If there
exists $I \in \mathbb{S}$ such that $L_I \bigcap Z_f$ has an
accumulation point, then $f\equiv 0$ on $B$.
\end{theorem}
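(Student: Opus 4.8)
The plan is to reduce the statement to the classical identity theorem for holomorphic functions of one complex variable, by restricting $f$ to the complex line $L_I$ and splitting it into two $\cc$-valued holomorphic components. First I would fix the imaginary unit $I\in\mathbb{S}$ for which $L_I\cap Z_f$ has an accumulation point, and choose any $J\in\mathbb{S}$ orthogonal to $I$, so that $\{1,I,J,IJ\}$ is an orthonormal basis of $\hh$. Writing $\cc_I:=\rr+I\rr$ for the copy of the complex numbers sitting inside $\hh$ along $L_I$, every value of $f$ on $L_I$ decomposes uniquely as $f_I=F+GJ$, where $F,G:B(0,R)\cap L_I\to\cc_I$; indeed $x_0+x_1I+x_2J+x_3IJ=(x_0+x_1I)+(x_2+x_3I)J$.

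The key step (a splitting lemma) is to verify that the regularity condition $\overline{\partial}_I f_I=0$ forces both $F$ and $G$ to be holomorphic. Expanding $\tfrac{1}{2}(\partial_x+I\partial_y)(F+GJ)=0$ and using that elements of $\cc_I$ commute with $I$, the $\cc_I$-part and the $\cc_I J$-part separate, yielding the two independent Cauchy--Riemann systems $\partial_x F+I\partial_y F=0$ and $\partial_x G+I\partial_y G=0$. Under the identification of $B\cap L_I$ with the complex disk $\{z:|z|<R\}$, with $I$ playing the role of $i$ and $\cc_I\cong\cc$, this says precisely that $F$ and $G$ are holomorphic on the disk.

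Since $f_I(q)=0$ exactly when $F(q)=0$ and $G(q)=0$, the hypothesis gives that the zero sets of both $F$ and $G$ have an accumulation point in $B\cap L_I$; the classical identity theorem then forces $F\equiv 0$ and $G\equiv 0$, hence $f_I\equiv 0$ on $B\cap L_I$. In particular $f$ vanishes on the whole real segment $\rr\cap B=(-R,R)$.

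Finally I would upgrade this slicewise vanishing to vanishing on all of $B$ via the power series representation of Theorem \ref{serie}. Writing $f(q)=\sum_{n\ge 0}q^n a_n$ with $a_n=\tfrac{1}{n!}\tfrac{\partial^n f}{\partial x^n}(0)$, the coefficients are determined by the restriction of $f$ to the real axis; since $f$ vanishes identically on $(-R,R)$, all the real derivatives of $f$ at $0$ vanish, so $a_n=0$ for every $n$ and thus $f\equiv 0$ on $B$. I expect the main obstacle to be the bookkeeping in the splitting lemma: one has to track the noncommutativity carefully to confirm that the regularity equation genuinely decouples into two scalar Cauchy--Riemann systems, rather than a coupled system mixing $F$ and $G$.
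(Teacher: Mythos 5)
Your proof is correct: the splitting of $f_I$ into $\cc_I$-valued holomorphic components $F+GJ$, the application of the classical identity theorem on $B\cap L_I$, and the final step killing all coefficients of the series expansion of Theorem \ref{serie} via vanishing on $(-R,R)$ all go through exactly as you describe (the decoupling works because $I$ commutes with $\cc_I$-valued functions and $J$ sits on the right). Note that the paper itself states this theorem without proof, citing \cite{advances}; your argument is essentially the standard proof given in that reference, so there is nothing to flag beyond the implicit (and intended) reading that the accumulation point lies inside $B$.
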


The following definitions can be found in \cite{open}, and are intended to overcome the difficulties arising from the facts that product of regular functions,  and the conjugate of a regular function, are not regular in general.

\begin{definition}
Let $f(q) = \sum_{n=0}^{+\infty}q^{n}a_{n}$ and $g(q) =
\sum_{n=0}^{+\infty}q^{n}b_{n}$ be given quaternionic power series
converging on $B(0,R)$. We define the
\textnormal{regular product} of $f$ and $g$ as the series $f*g(q) =
\sum_{n=0}^{+\infty}q^{n}c_{n}$, where $c_n = \sum_{k=0}^n a_k
b_{n-k}$ for all $n\in\mathbb{N}$.
\end{definition}
\noindent
The series expansion of $f*g$ converges on $B(0,R)$ (see \cite{caterina}), and a similar definition can be given for right regular  functions whose
regular product will have the coefficients on the left. When considering polynomials in
the quaternionic variable $q$ with coefficient on the left (thus right regular functions),
this definition of regular
product coincides with the standard multiplication of polynomials with coefficients in a noncommutative
ring (see, e.g., \cite{lam}).
\begin{definition}\label{symm}
Let $f(q) = \sum_{n=0}^{+\infty}q^{n}a_{n}$ be a given
quaternionic power series with radius of convergence $R$. We
define:
\begin{itemize}
\item[-] the \textnormal{regular conjugate} of
$f$ as the series $f^c(q) = \sum_{n=0}^{+\infty}q^{n}\bar a_{n}$
\item[-] the \textnormal{symmetrization} of $f$ as $f^s = f*f^c$.
\end{itemize}
\end{definition}
\noindent Since $f^s(q) =
\sum_{n=0}^{+\infty}q^{n}r_{n}$, where $r_n = \sum_{k=0}^n a_k \bar
a_{n-k}\in\rr$ for all $n\in\mathbb{N}$, we have that $f^s=f*f^c=f^c*f$ has real coefficients.
An analogous definition, that we will use without stating it explicitly, holds for right
regular power series.

The Cauchy kernel which we will define and study in this paper is inspired by the need
to have a suitable Cauchy formula
to extend the functional calculus for quaternionic operators to functions defined
on more general domains.
Indeed, in the complex case the kernel $(\zeta - z)^{-1}$ is
the sum of the series $\sum_{n \geq 0} z^n \zeta^{-1-n}$, which is obtained by the
standard series development. In the quaternionic case, the same arguments shows
$$
(s-q)^{-1}=((1-qs^{-1})s)^{-1}=s^{-1}(1-qs^{-1})^{-1}
$$
$$
=s^{-1}\sum_{n\geq 0}(qs^{-1})^n=
\sum_{n\geq 0}s^{-1}(qs^{-1})^n.
$$
If we now fix $s=u+vI$ and take $z\in L_I$ with $|z|<|s|$,
the previous expression can be written as
$$
(s-z)^{-1}=\sum_{n \geq 0} z^n s^{-1-n}.
$$
This expression is holomorphic on the disc $\Delta=\Delta (0, |s|)$ in $L_I$ and therefore,
it can be extended uniquely by the Identity Principle
to a regular function on the ball $B(0, |s|)$ in $\mathbb{H}$
$$
\sum_{n \geq 0} q^n s^{-1-n}.
$$
This explains the meaning of the following definition (see
\cite{cgssann}, \cite{cgss}):
\begin{definition}\label{Cauchykernel}
Let $q$, $s\in \mathbb{H}$ such that $sq\not=qs$.
We will call non commutative Cauchy kernel series (shortly Cauchy kernel series)
the series expansion
$$
S^{-1}(s,q):=\sum_{n\geq 0}q^ns^{-1-n},
$$
for $|q|<|s|$.
\end{definition}
\begin{proposition}
The Cauchy kernel series is left regular in $q$ and right regular in
$s$, respectively, for $|q|<|s|$.
\end{proposition}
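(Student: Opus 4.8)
The plan is to treat the two regularity statements separately, exploiting in each case that, once one variable is fixed, $S^{-1}(s,q)$ is an absolutely convergent series whose individual terms are already known to be regular. The preliminary point is convergence: for $|q|<|s|$ one has $|q^n s^{-1-n}|=|s|^{-1}(|q|/|s|)^n$, so the series converges absolutely and uniformly on every compact subset of $\{(q,s)\colon |q|<|s|\}$. This uniform convergence on compacta of the slice restrictions is what will legitimize applying the relevant first order Cauchy--Riemann operators to the sum term by term.

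For left regularity in $q$ (with $s$ fixed), I would simply observe that $\sum_{n\geq 0}q^n s^{-1-n}$ is a power series in $q$ with quaternionic coefficients $a_n=s^{-1-n}$ placed on the right, convergent for $|q|<|s|$. Since any power series of the form $\sum_n q^n a_n$ is left regular throughout its domain of convergence, the claim follows immediately, with no further computation.

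For right regularity in $s$ (with $q$ fixed), the key observation is that each factor $s\mapsto s^{-1-n}$ takes values in the very slice on which it is evaluated: for $s=u+vI\neq 0$ one has $s^{-1-n}\in L_I$, and under the identification $L_I\cong\mathbb{C}$ its restriction is $z\mapsto z^{-1-n}$, holomorphic on $L_I\setminus\{0\}$. Because $s^{-1-n}$ commutes with $I$, the left Cauchy--Riemann relation $\partial_u s^{-1-n}+I\,\partial_v s^{-1-n}=0$ coincides with the right relation $\partial_u s^{-1-n}+(\partial_v s^{-1-n})\,I=0$; hence $s\mapsto s^{-1-n}$ is both left and right regular on $\mathbb{H}\setminus\{0\}$. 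Multiplying on the left by the fixed quaternion $q^n$ preserves right regularity, since right regular functions form a left $\mathbb{H}$-vector space, so each term $q^n s^{-1-n}$ is right regular in $s$, and the uniformly convergent sum is right regular in $s$ for $|s|>|q|$.

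The step I expect to demand the most care is the passage from the regularity of the partial sums to that of the limit, i.e.\ the justification of term-by-term differentiation. Concretely, on each slice one must invoke the Weierstrass theorem for the (complex-linear) holomorphic restrictions to transfer the vanishing of the Cauchy--Riemann operator from the partial sums to their uniform limit on compacta. By contrast, the regularity of each individual term is essentially algebraic, following from the power-series characterization and the vector-space structure recalled above, and should present no difficulty.
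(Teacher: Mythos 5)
Your proof is correct, and it splits naturally against the paper's. For the left regularity in $q$ your argument is exactly the paper's: the paper's entire proof is the one-line remark that the Proposition is an immediate consequence of Theorem \ref{serie}, i.e.\ of the correspondence between left regular functions and convergent series of the form $\sum_{n\geq 0}q^na_n$, which is your second paragraph verbatim. The genuine difference is in the right regularity in $s$, for which the paper gives no separate argument at all, even though as a function of $s$ the kernel $\sum_{n\geq 0}q^ns^{-1-n}$ is a series of \emph{negative} powers with coefficients on the \emph{left}, a case not literally covered by Theorem \ref{serie}. Your chain of observations --- each $s\mapsto s^{-1-n}$ preserves every slice $L_I$ and restricts there to the holomorphic map $z\mapsto z^{-1-n}$, hence is both left and right regular since its values commute with $I$; left multiplication by the constant $q^n$ preserves right regularity because right regular functions form a left $\mathbb{H}$-vector space (the paper's first Remark); and locally uniform convergence allows passage to the limit --- supplies precisely what the paper leaves implicit, so your proof is the more self-contained one; what the paper's citation buys is brevity, at the cost of glossing over the $s$-variable. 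One detail you should add to make your limiting step airtight: the restriction of $q^ns^{-1-n}$ to a slice $L_I$ is $\mathbb{H}$-valued rather than $L_I$-valued, so the complex Weierstrass theorem does not apply to it directly. Choose $J\in\mathbb{S}$ orthogonal to $I$ and split $q^n=\alpha_n+J\beta_n$ with $\alpha_n,\beta_n\in L_I$; then $\sum_{n}\alpha_ns^{-1-n}$ and $\sum_{n}\beta_ns^{-1-n}$ are $L_I$-valued holomorphic series to which Weierstrass applies, and any function of the form $f_1+Jf_2$ with $f_1,f_2$ holomorphic and $L_I$-valued is right regular on $L_I$. This is the same decomposition device the paper itself uses in the proof of Lemma \ref{lemma1}, and with it your argument is complete.
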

\begin{proof}
It is an immediate consequence of Theorem \ref{serie}.
\end{proof}
We will now show two possible approaches to the construction of a regular Cauchy kernel
function. The first one is direct: we compute the
Cauchy kernel function and then we show that it is regular.
The second one makes use of the so called regularization process
introduced in \cite{caterina}.

\noindent Let us start by the direct approach
introduced, in a different setting, in \cite{cgssann},
\cite{cgss}. In these two papers we proved the following:
\begin{theorem}\label{equazione per S}
Let $q$
and $s$ be two quaternions such that $qs\not=sq$ and consider
$$
S^{-1}(s,q):=\sum_{n\geq 0} q^n s^{-1-n}.
$$
Then the  inverse $S(s,q)$ of the quaternion $S^{-1}(s,q)$ is the nontrivial solution to the equation
\begin{equation}\label{quadratica}
S^2+Sq-sS=0.
\end{equation}
\end{theorem}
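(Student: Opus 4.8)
The plan is to avoid computing $S^{-1}(s,q)$ in closed form, and instead to characterize its inverse $S(s,q)$ through a linear identity that the series satisfies directly. Write $T := S^{-1}(s,q) = \sum_{n\geq 0} q^n s^{-1-n}$, which converges to a quaternion for $|q|<|s|$. The whole proof rests on one telescoping identity, namely
$$ Ts - qT = 1. $$
First I would verify this by a term-by-term computation, justified by the absolute convergence of the series. Since $s^{-1-n}s = s^{-n}$, multiplying on the right by $s$ gives $Ts = \sum_{n\geq 0} q^n s^{-n}$, while multiplying on the left by $q$ and reindexing gives $qT = \sum_{n\geq 0} q^{n+1} s^{-1-n} = \sum_{m\geq 1} q^m s^{-m}$. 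Subtracting, every term with index $\geq 1$ cancels and only the $n=0$ term $q^0 s^0 = 1$ survives, which is the claimed identity.

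The point requiring care is that this is a genuinely \emph{two-sided} identity: because $q$ and $s$ do not commute, neither a purely left nor a purely right geometric-series manipulation closes up, and it is essential that $T$ be multiplied by $s$ on the right and by $q$ on the left, in exactly that order. The identity $Ts - qT = 1$ has, as an immediate bonus, the invertibility of $T$: were $T=0$ we would obtain $0=1$, so $T\neq 0$ and hence $S := T^{-1}$ is a well-defined nonzero quaternion in the division ring $\hh$.

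To pass from the identity for $T$ to the quadratic equation for $S$, I would conjugate by $T^{-1}=S$. Multiplying $Ts - qT = 1$ on the left and on the right by $S$ and using $ST = TS = 1$ yields
$$ S(Ts - qT)S = (ST)(sS) - (Sq)(TS) = sS - Sq = S\cdot 1 \cdot S = S^2, $$
and rearranging $sS - Sq = S^2$ gives exactly $S^2 + Sq - sS = 0$. Finally, since $S^2 + Sq - sS = S(S+q) - sS$ plainly vanishes for $S=0$, the equation always has the trivial solution; because we have shown $S = T^{-1}\neq 0$, the inverse of $S^{-1}(s,q)$ is the asserted nontrivial solution.

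I do not expect a serious obstacle beyond noncommutative bookkeeping, but the conceptual subtlety worth flagging is that the naive commutative intuition fails here. In the commutative case $S^2 + Sq - sS$ factors as $S(S+q-s)$ with nontrivial root $S = s-q$; in the quaternionic setting a direct check shows $(s-q)^2 + (s-q)q - s(s-q) = sq-qs \neq 0$ when $qs\neq sq$, so $S \neq s-q$. The correct characterization is precisely the two-sided telescoping identity $Ts - qT = 1$ together with conjugation by $T^{-1}$; getting the order of multiplication right at each step is the only thing that must be watched.
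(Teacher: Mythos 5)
Your proof is correct, and it is worth noting that the paper gives no in-text proof of this theorem at all: the statement is quoted as already established in \cite{cgssann}, \cite{cgss}. Your route --- proving the two-sided telescoping identity $T s - qT = 1$ for $T=\sum_{n\geq 0}q^ns^{-1-n}$ (valid where the series converges absolutely, i.e.\ for $|q|<|s|$), deducing $T\neq 0$ and hence invertibility in the division ring $\mathbb{H}$, and then conjugating by $S=T^{-1}$ to get $S^2+Sq-sS=0$ --- is a clean, self-contained argument, and it is essentially the ``resolvent equation'' mechanism underlying the cited functional-calculus papers. It is genuinely different from the nearest argument that does appear in the paper, namely the proof of Theorem \ref{thinvquat}: there the quadratic equation is taken as the starting point, transformed by the substitution $S=W-q$ into an equation with coefficients on one side, factored as the regular product $(W-s)*(W-q)=0$, and solved via the theory of zeros of quaternionic polynomials, which produces the explicit closed form $S(s,q)=(q-\overline{s})^{-1}s(q-\overline{s})-q$. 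What your approach buys is brevity and structural transparency: the quadratic equation is exposed as the conjugate of a linear identity, and the nonvanishing of the series sum comes for free from $Ts-qT=1$. What the paper's factorization approach buys is the explicit formula for $S(s,q)$, hence for the Cauchy kernel $S^{-1}(s,q)$, which the rest of the paper needs and which your argument deliberately never computes. One presentational caveat: you prove that $S=T^{-1}$ is \emph{a} nontrivial solution; that it is \emph{the} nontrivial solution (uniqueness of the nonzero root) is not established in your argument --- nor, for that matter, in the theorem as stated --- but is supplied by the paper's subsequent Remark on zero sets of quaternionic quadratic equations, so this is not a gap in what you were asked to show. Your closing observation that $(s-q)^2+(s-q)q-s(s-q)=sq-qs$, so that $s-q$ fails to be a solution precisely when $qs\neq sq$, is also correct and matches the paper's own Remark on $R(s,q)=s-q$.
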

\begin{remark} {\rm An algebraic equation with quaternionic coefficients can be
suitably rewritten with the coefficients on one side. When algebraic equations are written
with coefficients on one side, they may have isolated zeroes
or 2-dimensional spheres of solutions. In particular, a degree two
equation has either two isolated zeroes or a 2-sphere of zeroes
(see e.g. \cite{lam}). Since $S=0$ is a trivial solution of
(\ref{quadratica}), also the second solution must be isolated.}
\end{remark}
\begin{remark}{\rm
Note that $R(s,q)=s-q$ is a solution of equation
(\ref{quadratica}) if and only if $sq=qs$.}
\end{remark}
\noindent
We have the following result:
\begin{theorem} \label{thinvquat}
Let $q$, $s\in \mathbb{H}$ be such that
$qs\not=sq$. Then the non trivial solution of
\begin{equation}\label{Sequa}
S^2+Sq-sS=0
\end{equation}
 is given by
$$
S(s,q)=(q-\overline s)^{-1}s(q-\overline s)-q
$$
\begin{equation}\label{Ssolution}
=-(q-\overline s)^{-1}(q^2
-2q {\rm Re} (s)+|s|^2).
\end{equation}
\end{theorem}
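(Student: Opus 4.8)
The plan is to prove the statement by direct verification: I will first confirm that the two displayed expressions for $S(s,q)$ agree, and then substitute the closed form into equation (\ref{Sequa}) and check that it vanishes. Throughout, the one structural fact I will lean on is that $s+\overline{s}=2{\rm Re}(s)$ and $s\overline{s}=\overline{s}s=|s|^2$ are real, hence commute with everything, whereas $q$ and $s$ themselves do not commute.

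First I would record that the two expressions for $S(s,q)$ coincide. Writing $T=q-\overline{s}$ and factoring $T^{-1}$ on the left,
$$T^{-1}sT-q=T^{-1}\big[s(q-\overline{s})-(q-\overline{s})q\big]=T^{-1}\big[sq-|s|^2-q^2+\overline{s}q\big].$$
Since $sq+\overline{s}q=(s+\overline{s})q=2{\rm Re}(s)\,q$, the bracket equals $-(q^2-2{\rm Re}(s)q+|s|^2)$, which yields the second form. I will also note that $T=q-\overline{s}$ is invertible: if $q=\overline{s}$ then $qs=\overline{s}s=|s|^2=s\overline{s}=sq$, contradicting $qs\neq sq$.

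For the verification itself, set $P=q^2-2{\rm Re}(s)q+|s|^2$, so that $S=-T^{-1}P$ and hence $TS=-P$; note that $P$ is a polynomial in $q$ with real coefficients, so $Pq=qP$. The cleanest route is to multiply the left-hand side of (\ref{Sequa}) by $T$ on the left and show the result is $0$; since $T$ is invertible this suffices. Using $TS=-P$, $(TS)q=-Pq=-qP$, and $Ts=(q-\overline{s})s=qs-|s|^2$, I get
$$T(S^2+Sq-sS)=-PS-qP-(qs-|s|^2)S=(-P+|s|^2)S-qP-qsS.$$
Because $-P+|s|^2=-(q^2-2{\rm Re}(s)q)=-q(q-2{\rm Re}(s))$, the whole expression factors as $-q\big[(q-2{\rm Re}(s))S+sS+P\big]$. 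The bracket then collapses: $(q-2{\rm Re}(s))S+sS=(q-2{\rm Re}(s)+s)S=(q-\overline{s})S=TS=-P$, using $s-2{\rm Re}(s)=-\overline{s}$, so the bracket is $-P+P=0$. Thus $T(S^2+Sq-sS)=0$, and invertibility of $T$ gives $S^2+Sq-sS=0$.

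Finally I would address nontriviality. From $S=-T^{-1}P$ one sees that $S=0$ precisely when $P=0$, i.e. when $q$ and $s$ have the same real part and the same modulus; so as long as $q$ does not lie on the $2$-sphere $\{{\rm Re}(s)+I|{\rm Im}(s)|:I\in\mathbb{S}\}$ determined by $s$, the formula produces a nonzero $S$, distinct from the trivial root $S=0$, as claimed. The main obstacle throughout is the noncommutative bookkeeping — keeping $q$ on the correct side and invoking the reality of ${\rm Re}(s)$ and $|s|^2$ at exactly the right moments — together with the borderline case $P=0$, which can occur even when $qs\neq sq$ (for instance $q=i$, $s=j$) and in which the formula degenerates to $S=0$, so that the clean ``two isolated roots'' picture of the preceding remark must be interpreted with care.
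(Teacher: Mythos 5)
Your proof is correct, but it follows a genuinely different route from the paper's. You verify the closed form directly: after showing the two displayed expressions agree and that $T=q-\overline{s}$ is invertible (else $qs=sq$), you multiply the left-hand side of (\ref{Sequa}) by $T$ and use the fact that $P=q^2-2{\rm Re}(s)q+|s|^2$ is a polynomial in $q$ with real coefficients (so $Pq=qP$) to make everything telescope to zero via the identity $q-2{\rm Re}(s)+s=q-\overline{s}$. This is essentially the verification strategy that the paper attributes to \cite{cgss} and deliberately sets aside: the paper's own proof is constructive, substituting $S=W-q$ to turn (\ref{Sequa}) into the one-sided quadratic $W^2-(s+q)W+sq=(W-s)*(W-q)=0$ and then reading off the roots $W=q$ and $W=(q-\overline{s})^{-1}s(q-\overline{s})$ from the factorization theory of quaternionic polynomials (\cite{ps}, \cite{serodio zeri}). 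What the paper's route buys is a derivation explaining where the formula comes from; what your route buys is self-containedness (no appeal to external root-structure theorems) plus a sharper statement of scope: you observe that $S=-T^{-1}P$ vanishes exactly when $P=0$, i.e.\ when $q$ lies on the sphere ${\rm Re}(s)+|{\rm Im}(s)|\mathbb{S}$, which is compatible with $qs\neq sq$ (e.g.\ $q=i$, $s=j$); in that degenerate case the quadratic has only the trivial root, so the theorem's phrase ``the non trivial solution'' (and the preceding remark on two isolated roots) needs precisely the caveat you supply. As a minor bonus, your computation corrects in passing the slip in the paper's proof, which writes $S=(q-\overline{s})^{-1}s(q-\overline{s})-s$ where the last term should be $-q$.
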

\begin{proof}
The result has been proved in \cite{cgss} by directly verifying that $S(s,q)$ is a solution.
Here we show how to find the solution using the techniques developed in
\cite{ps}, \cite{serodio zeri}. We remark that
it is possible to find the same result also using the so-called
Niven's  algorithm method which however involves longer computations.
We transform the equation $S^2+Sq-sS=0$ into another one having
coefficients on the left.
Set
$$
S:=W-q
$$
and replace it in equation (\ref{Sequa}) to get
$$
(W-q)(W-q)+(W-q)q-s(W-q)=0,
$$
so the equation becomes
$$
W^2-(s+q)W+sq=(W-s)*(W-q)=0,
$$
where $*$ denotes the left regularized product.
One root is $W=q$  (see \cite{serodio zeri}), while the second root is $W=(q-\bar s)^{-1}s(q-\bar s)$,
thus $S=(q-\bar s)^{-1}s(q-\bar s)-s$. By grouping $(q-\bar s)^{-1}$ on the left we obtain
(\ref{Ssolution}).
\end{proof}
\begin{definition}
The function defined by
\begin{equation}\label{esse-1}
S^{-1}(s,q)=-(q^2-2q{\rm Re}(s)+|s|^2)^{-1}(q-\bar s).
\end{equation}
will be called Cauchy kernel function.
\end{definition}
Note that we are using the same symbol $S^{-1}$ to denote both the Cauchy kernel series and the Cauchy
kernel function. Indeed they coincide where they are both defined by virtue of their regularity
(see Proposition \ref{regularboth}) and in view of the Identity Principle.
\begin{proposition}
For any $q,s\in\mathbb{H}$ such that $q\not= \bar{s}$ the following identity holds:
\begin{equation}
(q-\overline s)^{-1}s(q-\overline s)-q=(s-\bar q)q(s-\bar q)^{-1}+s,
\end{equation}
or, equivalently,
\begin{equation}\label{second}
-(q-\overline s)^{-1}(q^2
-2q {\rm Re} (s)+|s|^2)=(s^2-{\rm Re}(q)s+|q|^2)(s-\bar q)^{-1}.
\end{equation}
\end{proposition}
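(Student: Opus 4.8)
The plan is to prove the first displayed identity and then deduce the second from it. By Theorem~\ref{thinvquat} the common left-hand side $(q-\overline s)^{-1}s(q-\overline s)-q$ already equals $-(q-\overline s)^{-1}(q^2-2q{\rm Re}(s)+|s|^2)$, i.e. the left-hand side of the second equation; and the right-hand side of the first equation, once collected over the factor $(s-\overline q)$ standing on the right, turns into the right-hand side of the second. So the two displayed equations carry the same content and it is enough to establish one of them. Note that the hypothesis $q\neq\overline s$ is exactly what makes both $q-\overline s$ and $s-\overline q$ invertible, since $s-\overline q=-\overline{\,q-\overline s\,}$.

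For the core identity I would set $u=q-\overline s$, so that $\overline u=\overline q-s$ and $s-\overline q=-\overline u$; the two sign changes then cancel and give $(s-\overline q)\,q\,(s-\overline q)^{-1}=\overline u\,q\,\overline u^{-1}$. Moving $s$ to the left, the identity to be proved becomes $u^{-1}su-s=q-\overline u\,q\,\overline u^{-1}$. I would evaluate both sides by collecting the conjugations: on the left $u^{-1}su-s=u^{-1}(su-us)$, and since $u=q-\overline s$ the terms containing $s\overline s=\overline s s$ cancel, leaving $su-us=sq-qs$; on the right $q-\overline u\,q\,\overline u^{-1}=(q\overline u-\overline u q)\,\overline u^{-1}$, and the terms containing $q\overline q=\overline q q$ cancel, leaving $q\overline u-\overline u q=sq-qs$ as well. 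Writing $c=sq-qs$, the whole identity thus collapses to $u^{-1}c=c\,\overline u^{-1}$, equivalently to the commutation relation $c\,\overline u=u\,c$.

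Proving this last relation is the heart of the matter. The point is that $c=sq-qs$ is purely imaginary, because ${\rm Re}(sq)={\rm Re}(qs)$, and that as a vector it is orthogonal to both ${\rm Im}(q)$ and ${\rm Im}(s)$ (it is proportional to the cross product of ${\rm Im}(s)$ and ${\rm Im}(q)$). Now ${\rm Re}(u)$ is real and hence commutes with $c$, while ${\rm Im}(u)={\rm Im}(q)+{\rm Im}(s)$; and for purely imaginary quaternions $a,b$ one has $ab+ba=-2\langle a,b\rangle\in\mathbb{R}$. Applying this to $c$ and ${\rm Im}(u)$ gives $c\,{\rm Im}(u)+{\rm Im}(u)\,c=-2\langle c,{\rm Im}(u)\rangle=0$ by the orthogonality just noted. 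Finally, writing $\overline u={\rm Re}(u)-{\rm Im}(u)$, we get $c\,\overline u-u\,c=-(c\,{\rm Im}(u)+{\rm Im}(u)\,c)=0$, which is precisely $c\,\overline u=u\,c$.

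I expect the only real obstacle to be the non-commutative bookkeeping: one must follow carefully the signs produced by conjugation and by the relation $s-\overline q=-\overline{\,q-\overline s\,}$, and make sure that when passing between the two displayed forms the conjugation term enters with the correct sign and the quadratic factor with the correct multiple of the real part. A more computational alternative would avoid the vector picture altogether: since the left-hand side is, by Theorems~\ref{equazione per S} and~\ref{thinvquat}, the unique nonzero solution of $S^2+Sq-sS=0$, it would suffice to verify that the right-hand side also solves this equation and is nonzero, whence equality by the Remark on isolated solutions. That route, however, must treat the commuting case $qs=sq$ separately (there both sides reduce to $s-q$), whereas the commutator argument above covers every $q\neq\overline s$ uniformly—when $qs=sq$ one simply has $c=0$ and the reduced identity holds trivially.
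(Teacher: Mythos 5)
Your route is genuinely different from the paper's. The paper obtains both expressions as the unique nontrivial solution of the quadratic $S^2+Sq-sS=0$: having already solved it with the substitution $S=W-q$ (Theorem \ref{thinvquat}), it solves it a second time with $S=W+s$, splits the resulting polynomial as $(W+s)*(W+q)=0$, and identifies the nontrivial root using the zero structure of quaternionic quadratics. You instead reduce the identity directly to the commutation relation $c\,\overline u=u\,c$ for $c=sq-qs$ and $u=q-\overline s$, and prove it by observing that $c$ is twice the cross product ${\rm Im}(s)\times{\rm Im}(q)$, hence orthogonal to ${\rm Im}(u)={\rm Im}(q)+{\rm Im}(s)$ and therefore anticommuting with it. Your argument is more elementary and self-contained (no appeal to the root theory of \cite{caterina}, \cite{serodio zeri}), and, as you note, it covers the commuting case $qs=sq$ uniformly, whereas the uniqueness argument needs $qs\neq sq$.

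There is, however, a sign discrepancy you must confront. The first displayed identity as printed is false: its right-hand side, being $s$ plus a conjugate of $q$, has real part ${\rm Re}(s)+{\rm Re}(q)$, while the left-hand side has real part ${\rm Re}(s)-{\rm Re}(q)$; concretely, for $q=1$, $s=i$ the left side is $i-1$ and the right side is $1+i$. Your own rearrangement silently corrects this: moving $s$ to the left in the printed identity gives $u^{-1}su-s=q+\overline u\,q\,\overline u^{-1}$, not $q-\overline u\,q\,\overline u^{-1}$. What your commutator argument actually (and correctly) proves is
$$
(q-\overline s)^{-1}s(q-\overline s)-q\;=\;s-(s-\overline q)q(s-\overline q)^{-1},
$$
and collecting this right-hand side over $(s-\overline q)^{-1}$ yields $\bigl(s^2-2{\rm Re}(q)s+|q|^2\bigr)(s-\overline q)^{-1}$, so that the companion formula (\ref{second}) likewise needs the coefficient $2{\rm Re}(q)$ rather than ${\rm Re}(q)$. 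Note that the paper's own proof contains the matching slip: a root of $(W+s)*(W+q)=0$ coming from the factor $W+q$ must be similar to $-q$, hence has real part $-{\rm Re}(q)$, so it is $-(s-\overline q)q(s-\overline q)^{-1}$ and not $(s-\overline q)q(s-\overline q)^{-1}$. In short, your proof is sound and in fact repairs the statement, but as written it purports to establish the printed (false) identity, and your first-paragraph claim that the two displays are equivalent ``by collecting over $(s-\overline q)$'' only goes through after both corrections are made; you should say explicitly that you are proving the corrected identities.
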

\begin{proof}
One may prove the identities by direct computations but we prefer to follow here a shorter approach.
Let us solve equation (\ref{Sequa}) by transforming it into an
equation with right coefficients by setting
$$
S:=W+s
$$
and replacing it in the equation. We get
$$
(W+s)(W+s)+(W+s)q-s(W+s)=
W^2+W(s+q)+sq=0.
$$
This equation can be split as
$(W+s)*(W+q)=0$, where $*$ denotes the (right) regular product.
It is immediate, see \cite{caterina}, that one root is $W=-s$ while the second is $W=(-\bar q +s)q(-\bar q+s)^{-1}$.
These two roots correspond to $S=0$ and $S=(s-\bar q)q(s-\bar q)^{-1}+s$ which coincides
with (\ref{Ssolution})
when written in the form $S=(s^2-{\rm Re}(q)s+s^2)(s-\bar q)^{-1}$. \end{proof}

\noindent As announced, let us now follow the second approach to find
$S^{-1}(s,q)$. When $q$, $s$ commute, the sum of the Cauchy kernel series  is
the function $R^{-1}(q)=(s-q)^{-1}$.
Since this last function is, in general,
not regular, we
construct its regular extension. A crucial point is that the regular
extension of $R^{-1}$ coincides with the sum of the Cauchy kernel series.
\begin{proposition}
Let $f(q)$ be a regular function in $B(0,R)$. Its inverse with respect to the regular  product is the function
$$
f^{-*}(q)=(f^s(q))^{-1}f^c(q).
$$
The function $f^{-*}$ is regular on $B(0,R)\setminus \{ q\in \hh\ : \ f^s(q)=0\}$.
\end{proposition}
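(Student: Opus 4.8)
The plan is to reduce the whole statement to two elementary properties of the $*$-product that follow at once from its coefficient definition, namely that a power series with real coefficients is \emph{central} for $*$ and that its $*$-product with any series agrees with the ordinary pointwise product; once these are in hand the proposition becomes a short computation.

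First I would record the two facts. Let $h(q)=\sum_n q^n r_n$ with $r_n\in\rr$ and let $g(q)=\sum_n q^n b_n$. Since each $r_n$ is real it commutes with $q$ and with every $b_m$, so regrouping the Cauchy products gives
$$
h(q)g(q)=\sum_{m,j}q^m r_m q^j b_j=\sum_n q^n\sum_{k=0}^n r_k b_{n-k}=(h*g)(q),
$$
that is $h*g=h\cdot g$ pointwise; and reindexing the same sum,
$$
(h*g)(q)=\sum_n q^n\sum_{k=0}^n r_k b_{n-k}=\sum_n q^n\sum_{k=0}^n b_k r_{n-k}=(g*h)(q),
$$
so $h$ is central. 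The identical manipulation applies with $h$ on the right.

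Next I would check that $(f^s)^{-1}$ is regular with real coefficients on $B(0,R)\setminus\{q:f^s(q)=0\}$. Because $f^s$ has real coefficients (Definition \ref{symm}), on each slice we may write $f^s(x+yI)=A(x,y)+IB(x,y)$ with $A,B$ real and independent of $I$; regularity of $f^s$ is exactly the system $A_x=B_y$, $B_x=-A_y$. Then
$$
\big(f^s(x+yI)\big)^{-1}=\frac{A-IB}{A^2+B^2}=C(x,y)+ID(x,y),
$$
with $C,D$ real and $I$-independent, and $C+iD=(A+iB)^{-1}$ is holomorphic wherever $A+iB\ne 0$; hence $C,D$ satisfy the same Cauchy--Riemann system and $(f^s)^{-1}$ is regular (with real local coefficients) off the zero set of $f^s$. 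Regularity of $f^{-*}=(f^s)^{-1}f^c$ then follows from a product rule: for any $I$, since $(f^s)^{-1}(x+yI)\in L_I$ commutes with $I$,
$$
\overline{\partial}_I\big((f^s)^{-1}f^c\big)=\big(\overline{\partial}_I(f^s)^{-1}\big)f^c+(f^s)^{-1}\big(\overline{\partial}_I f^c\big)=0.
$$

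Finally, by the fact that $*$ coincides with the pointwise product for the real-coefficient left factor $(f^s)^{-1}$ we have $f^{-*}=(f^s)^{-1}f^c=(f^s)^{-1}*f^c$, and I would conclude using associativity of $*$, centrality of $(f^s)^{-1}$, the identities $f*f^c=f^c*f=f^s$ and $f^s*(f^s)^{-1}=f^s\cdot(f^s)^{-1}=1$:
$$
f*f^{-*}=f*\big(f^c*(f^s)^{-1}\big)=(f*f^c)*(f^s)^{-1}=f^s*(f^s)^{-1}=1,
$$
$$
f^{-*}*f=\big((f^s)^{-1}*f^c\big)*f=(f^s)^{-1}*(f^c*f)=(f^s)^{-1}*f^s=1,
$$
so $f^{-*}$ is the two-sided $*$-inverse of $f$. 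The part I expect to require the most care is not this algebra but the analytic point underlying it: making precise that $(f^s)^{-1}$ is genuinely slice regular and that the $*$-product and its associativity are available on the domain $B(0,R)\setminus\{f^s=0\}$, which in general is not a ball but a circular set. This is handled by the circular-domain extension of the $*$-product together with the observation that the zero set of the real-coefficient function $f^s$ is a union of real points and spheres $x+y\mathbb{S}$, so its complement is again a circular domain on which the above identities make sense.
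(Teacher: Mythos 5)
Your proposal is correct and takes essentially the same route as the paper: both arguments rest on the single key fact that $f^s$ has real coefficients, so that $(f^s)^{-1}$ is regular slice by slice and the $*$-product against it collapses to the pointwise product, which yields at once the regularity of $f^{-*}=(f^s)^{-1}f^c$ and the inverse identity $f^{-*}*f=1$. Your additional touches --- the holomorphy argument for the reciprocal on each slice (the paper instead differentiates $(f^s)^{-1}$ directly), the explicit Leibniz rule exploiting that $(f^s)^{-1}$ is $L_I$-valued on $L_I$, the two-sided inverse check, and the closing remark that the zero set of $f^s$ is a union of real points and spheres so that its complement is circular --- are more careful versions of steps the paper dispatches as ``easy'' or ``immediate,'' not a different method.
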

\begin{proof}
An easy computation shows that
$$
(f^{-*}*f)(q)=(f^s(q))^{-1}(f^c*f)(q)=1.
$$
Moreover, the series expansion $f^s(q) = \sum_{n=0}^{+\infty}q^{n}r_{n}$ has real coefficients and hence
$(f^s(q))^{-1}$ is regular; indeed, for $I\in\mathbb{S}$, if we set $z=x+yI$ we have:
$$
\frac{\pp}{\pp x}(f^s(x+yI))^{-1}=-(f^s(x+yI))^{-2}\frac{\pp}{\pp x}f^s(x+yI)
$$
$$
\frac{\pp}{\pp y}(f^s(x+yI))^{-1}=-(f^s(x+yI))^{-2}\frac{\pp}{\pp y}f^s(x+yI)I=
-I(f^s(x+yI))^{-2}\frac{\pp}{\pp y}f^s(x+yI).
$$
It is immediate now to verify that
$\left(\frac{\partial }{\partial x}+I\frac{\partial }{\partial y}\right)(f^s(x+yI))=0$
for all $I\in\mathbb{S}$. The regularity of $f^{-*}$ follows by the regularity of $f^c$
and  by the fact that $(f^s)^{-1}*f^c=(f^s)^{-1}f^c$ since $(f^s)^{-1}$ has real coefficients.
\end{proof}
\begin{remark}\label{regularinverse}{\rm We point out that $(f^s)^{-1}*f^c(q)=(f^s)^{-1}f^c(q)=f^c*(f^s)^{-1}(q)$.
If we construct a regular inverse with respect to the left regularized product, we get an analogous formula
where the symbol $*$ denotes the left regularized product:
$(f^s)^{-1}*f^c(q)=f^c*(f^s)^{-1}(q)=f^c(f^s)^{-1}(q)$.
Note that the only difference is the position of $f^c$ when using the standard product. }
\end{remark}
\noindent By applying the previous proposition to $R(s,q)=s-q$ we obtain:
\begin{proposition}
The inverse with respect to the regular product of the function $R(s,q)=R_s(q)=s-q$ is
$S^{-1}(s,q)$.
\end{proposition}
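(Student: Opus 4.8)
The plan is to apply the formula of the preceding proposition, $f^{-*}(q)=(f^s(q))^{-1}f^c(q)$, directly to the function $R(s,q)=R_s(q)=s-q$ regarded as a power series in the variable $q$ (with $s$ fixed), and then to simplify the result until it matches the Cauchy kernel function of (\ref{esse-1}). The key observation is that $R_s$ is a degree-one polynomial in $q$, so all of the relevant series are finite and the previous proposition applies on all of $\hh$.

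First I would read off the right coefficients of $R_s(q)=s-q=q^0 s+q^1(-1)$, namely $a_0=s$, $a_1=-1$, and $a_n=0$ for $n\ge 2$. From the definition of the regular conjugate this gives $R^c(s,q)=\sum_n q^n\bar a_n=\bar s-q$. Next I would compute the symmetrization $R^s=R*R^c$ using the $*$-product coefficients $c_n=\sum_{k=0}^n a_k\bar a_{n-k}$: one finds $c_0=s\bar s=|s|^2$, $c_1=s(-1)+(-1)\bar s=-2{\rm Re}(s)$, $c_2=(-1)(-1)=1$, and $c_n=0$ for $n\ge 3$, so that
$$
R^s(s,q)=q^2-2q\,{\rm Re}(s)+|s|^2,
$$
which, as predicted by Definition \ref{symm}, has real coefficients.

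Finally, substituting into the formula of the preceding proposition I would obtain
$$
R^{-*}(s,q)=(R^s(s,q))^{-1}R^c(s,q)=(q^2-2q\,{\rm Re}(s)+|s|^2)^{-1}(\bar s-q),
$$
and writing $\bar s-q=-(q-\bar s)$ identifies the right-hand side with $S^{-1}(s,q)$ as given in (\ref{esse-1}). This also closes the loop with the direct approach, confirming that the regular extension of $(s-q)^{-1}$ produced by the regularization process coincides with the Cauchy kernel function.

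I do not expect a serious obstacle here, since the computation terminates after finitely many terms. The only points demanding care are keeping the coefficients on the correct (right) side throughout the $*$-product, because $s$ is a noncommuting constant, and recording the domain of validity: by the preceding proposition the inverse $R^{-*}$ is regular precisely off the zero set of $R^s$, that is, off the $2$-sphere $\{\,q\in\hh:\ {\rm Re}(q)={\rm Re}(s),\ |q|=|s|\,\}$, which is exactly the set on which $S^{-1}(s,q)$ in (\ref{esse-1}) is undefined.
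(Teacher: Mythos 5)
Your proof is correct and takes essentially the same route as the paper: the paper's own proof consists of the single line stating that $R^c_s(q)=\bar s-q$ and $R^s_s(q)=q^2-2{\rm Re}(s)q+|s|^2$, leaving the substitution into $f^{-*}=(f^s)^{-1}f^c$ implicit. You have merely made explicit the coefficient computations and the final identification with $S^{-1}(s,q)$, together with the (correct) observation about the domain of regularity.
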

\begin{proof} We have that $R^c_s(q)=\bar s-q$ and $R^s_s(q)=(q^2-2{\rm Re}(s)q+|s|^2)$.
\end{proof}
\noindent We also have:
\begin{proposition}\label{regularboth}
The function $S^{-1}(s,q)$ is left regular in the variable $q$ and
right regular in the variable $s$ in its domain of definition.
\end{proposition}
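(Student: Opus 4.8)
The plan is to prove the two halves separately and to reduce each to the regular-inverse machinery just established, using the left--right symmetry of the construction. For the left regularity in $q$ there is in fact almost nothing left to do. By the preceding proposition, $S^{-1}(s,q)$ is exactly the inverse of $R_s(q)=s-q$ with respect to the regular product, that is $S^{-1}(s,q)=R_s^{-*}(q)=(R_s^s)^{-1}(q)\,R_s^c(q)$ with $R_s^c(q)=\bar s-q$ and $R_s^s(q)=q^2-2{\rm Re}(s)q+|s|^2$. Since $R_s(q)=s-q$ is a degree-one polynomial in $q$, it is regular on all of $\hh$, so the proposition on regular inverses applies directly and shows that $R_s^{-*}(q)$ is left regular in $q$ on the complement of the zero set of $R_s^s$, i.e. wherever $q^2-2{\rm Re}(s)q+|s|^2\neq 0$. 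This is precisely the domain of definition of the kernel, so the first assertion is settled.

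For the right regularity in $s$ I would run the mirror of this argument, fixing $q$ and regarding $R(s,q)=s-q$ as a function of $s$. As a degree-one polynomial in $s$ with coefficients on the left it is right regular in $s$, and the right-handed analogues of the regular product, regular conjugate and symmetrization (valid by the analogous, unstated definitions for right regular series) produce a right regular inverse of the shape $R^c(s)\,(R^s(s))^{-1}$, where $R^c(s)=s-\bar q$ is right regular and $R^s(s)$ is a polynomial in $s$ with real coefficients. Inverting the symmetric form of $S(s,q)$ recorded in identity (\ref{second}) exhibits $S^{-1}(s,q)$ in exactly this form, as the product of the right regular factor $s-\bar q$ with the inverse of a real-coefficient polynomial in $s$; hence $S^{-1}(s,q)$ is right regular in $s$ on the complement of the zero set of that polynomial.

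The one point that needs genuine care -- and it is the same computation that drives the regular-inverse proposition -- is the treatment of the real-coefficient symmetrizations $R_s^s(q)$ and $R^s(s)$: because their coefficients are real, their pointwise reciprocals are again regular (differentiate $(f^s)^{-1}$ as in the proof of that proposition), and, crucially, the regular product with such a central factor coincides with the ordinary product, which is what lets the closed form be identified with the genuine regular inverse. I would also keep careful track of the side on which the conjugate factor sits, as recorded in Remark \ref{regularinverse}, so that $-(q^2-2q{\rm Re}(s)+|s|^2)^{-1}(q-\bar s)$ is matched with $R_s^{-*}$ in the variable $q$ and with the right regular inverse in the variable $s$. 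A self-contained alternative, avoiding the symmetry bookkeeping, would be to apply $\overline{\partial}_I$ in $q$ and the right Cauchy--Riemann operator in $s$ directly to the explicit formula, again exploiting that the real-coefficient denominators are annihilated by these operators; this is the step I would expect to be the most error-prone, but it is routine once the central role of the real-coefficient factors is recognized.
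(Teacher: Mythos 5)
Your proof is correct, and for the left regularity in $q$ it is exactly the paper's argument: the paper's proof reads, in full, ``The regularity in $q$ follows by construction. The right regularity in $s$ follows by direct computations,'' and your first paragraph simply makes ``by construction'' explicit via the regular-inverse proposition applied to $R_s(q)=s-q$. Where you genuinely diverge is the second half: instead of the ``direct computations'' (applying the right Cauchy--Riemann operator in $s$ to the explicit formula) that the paper invokes, you mirror the regular-inverse machinery on the right and use identity (\ref{second}) to exhibit $S^{-1}(s,q)=(s-\bar q)\bigl(s^2-2{\rm Re}(q)s+|q|^2\bigr)^{-1}$ as the right regular inverse of $R_q(s)=s-q$. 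This is legitimate, and it is in fact precisely the content of the remark the authors place immediately after the proposition, so your structural route is one the paper itself endorses, just not in its official proof. What your route buys: no differentiation at all, and the identification of the $s$-singularities as the zero set of the real-coefficient polynomial $s^2-2{\rm Re}(q)s+|q|^2$ (the sphere $\Sigma_q$) comes for free. What it costs: you must lean on the right-handed analogues of Definition \ref{symm}, of Remark \ref{regularinverse}, and of the regular-inverse proposition, which the paper asserts exist but never writes out; your closing observation that the whole mechanism rests only on the symmetrization having real coefficients (so that its pointwise reciprocal is regular and its $*$-product with anything coincides with the ordinary product) is exactly what is needed to make that transfer honest, so no genuine gap remains. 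One small caution: as printed, identity (\ref{second}) contains a typo (it reads ${\rm Re}(q)s$ where it should read $2\,{\rm Re}(q)s$), so when you ``invert the symmetric form'' you should work with the corrected polynomial, as you implicitly do.
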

\begin{proof}
The regularity in $q$ follows by construction. The right regularity in $s$ follows by
direct computations.
\end{proof}
\begin{remark}{\rm The right regular inverse of the function $R(s,q)=R_q(s)$ is the function $(s-\bar q)(s^2-{\rm Re}(q)s+|q|^2)^{-1}$ (see remark \ref{regularinverse}) which, by  identity (\ref{second}),  coincides with $S^{-1}(s,q)$.
 }\end{remark}
\begin{definition}
Given a quaternion $q=x+I_p y$ $(x,y \in \mathbb{R}$ and $I_q\in \mathbb{S})$,
we will denote  by $\Sigma_q=x+\mathbb{S}y$ the $2-$sphere of its (generalized) conjugates.
\end{definition}
\noindent Note that the 2-sphere $\Sigma_q$ is equivalently defined
by the conditions $|q|=x^2+y^2$ and ${\rm Re}(q)=x$.
\begin{proposition}
Let $q\in\hh\backslash\rr$. The singularities of the function $S^{-1}(s,q)=S^{-1}_q(s)=
-(q^2-{\rm Re}(s)q+|s|^2)^{-1}(q-\bar s)$
lie on the 2-sphere $S_q$.
More precisely: on the plane $L_I$,
$I\not=I_q$ the restriction of the function $S^{-1}(s,q)$ to $L_{I}$ has the two
singularities ${\rm Re}(q)\pm I|{\rm Im}(q)|$; while on the plane $L_{I_q}$
the restriction of the function $S^{-1}(s,q)$ to $L_{I_q}$
has the only singularity $q$.
When $q\in\rr$, then $S^{-1}_q(s)=(q-s)^{-1}$ and the only singularity is $q$.
\end{proposition}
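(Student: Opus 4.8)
The plan is to read off the singularities of $S^{-1}_q(s)=-(q^2-2q\,{\rm Re}(s)+|s|^2)^{-1}(q-\overline s)$ as the locus where the denominator $D(s):=q^2-2q\,{\rm Re}(s)+|s|^2$ fails to be invertible, i.e. where $D(s)=0$, and then to check at each such point whether the numerator $(q-\overline s)$ cancels the zero. Throughout I fix $q\notin\rr$ and write $q=x+I_qy$ with $x={\rm Re}(q)$ and $y=|{\rm Im}(q)|>0$, so that $q^2=(x^2-y^2)+2xy\,I_q$.

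The key structural observation is that $D(s)$ always lies in the plane $L_{I_q}$: indeed $q^2\in L_{I_q}$, the term $2q\,{\rm Re}(s)$ is a real multiple of $q\in L_{I_q}$, and $|s|^2\in\rr\subseteq L_{I_q}$. Hence, writing ${\rm Re}(s)=u$ and $|{\rm Im}(s)|=w$, I can split $D(s)$ into its real and $I_q$-components:
$$
D(s)=\bigl(x^2-y^2-2xu+u^2+w^2\bigr)+2y\,I_q\,(x-u).
$$
Setting both components to zero gives first $u=x$ (from the $I_q$-part, using $y>0$) and then $w^2=y^2$, i.e. $w=y$. Thus $D(s)=0$ exactly when ${\rm Re}(s)={\rm Re}(q)$ and $|{\rm Im}(s)|=|{\rm Im}(q)|$, which is precisely the sphere $\Sigma_q$; this establishes that all singularities lie on $\Sigma_q$.

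Next I would intersect $\Sigma_q$ with an arbitrary plane $L_I$. Any $s=u+Iv\in\Sigma_q\cap L_I$ satisfies $u={\rm Re}(q)$ and $|v|=|{\rm Im}(q)|=y$, so $s={\rm Re}(q)\pm I\,|{\rm Im}(q)|$, two points. For $I\neq\pm I_q$ (i.e. $L_I\neq L_{I_q}$) I check that the numerator does not vanish at either point: since $q-\overline s=0$ only at $s=\overline q=x-I_qy$, and $\overline q\in L_I$ would force $I=\pm I_q$, the numerator stays bounded away from zero while $|D(s)^{-1}|\to\infty$. Hence both ${\rm Re}(q)\pm I\,|{\rm Im}(q)|$ are genuine singularities.

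The delicate case, and the main obstacle, is the plane $L_{I_q}$ itself (and, analogously, $q\in\rr$), where $D$ still vanishes at both $q$ and $\overline q$ yet only one singularity survives. Here I would exploit commutativity inside $L_{I_q}$: since $s,q\in L_{I_q}$ commute,
$$
D(s)=(q-s)(q-\overline s),
$$
so that $S^{-1}_q(s)=-\bigl[(q-s)(q-\overline s)\bigr]^{-1}(q-\overline s)=-(q-\overline s)^{-1}(q-s)^{-1}(q-\overline s)$. Using once more that all factors lie in the commutative plane $L_{I_q}$, the outer factors cancel and this collapses to $-(q-s)^{-1}=(s-q)^{-1}$. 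The apparent singularity at $\overline q$ is therefore removable — the numerator zero absorbs the denominator zero — leaving $q$ as the unique singularity on $L_{I_q}$. The same factorization, valid because a real $q$ is central, yields $S^{-1}_q(s)=(s-q)^{-1}$ when $q\in\rr$, with the single singularity $q$. The only point demanding care throughout is tracking the order of the noncommutative factors until one reaches a plane where everything commutes; outside $L_{I_q}$ no such cancellation can occur, precisely because $\overline q\notin L_I$.
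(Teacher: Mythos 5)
Your proof is correct and follows essentially the same route as the paper's: both identify the singularities as the zeros of the denominator $q^2-2q\,{\rm Re}(s)+|s|^2$, split that equation into real and $I_q$-imaginary parts to conclude the zero set is exactly $\Sigma_q$, and then use commutativity on $L_{I_q}$ to factor the denominator as $(q-s)(q-\overline{s})$ and cancel, leaving the single singularity $q$ (your sign $(s-q)^{-1}$ is in fact the correct one, consistent with the Cauchy kernel series; the paper's $(q-s)^{-1}$ is a typo). Your only additions are cosmetic care: the explicit check that the numerator $q-\overline{s}$ does not vanish at ${\rm Re}(q)\pm I|{\rm Im}(q)|$ when $L_I\neq L_{I_q}$, a point the paper leaves implicit.
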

\begin{proof}
Suppose $q\in\hh\backslash\rr$.
The singularities of $S^{-1}_q(s)$ correspond to the roots of $|s|^2-2{\rm Re}(s)q+q^2=0$.
This equation can be written by splitting real and imaginary parts as
$|s|^2-2{\rm Re}(s){\rm Re}(q)+{\rm Re}(q)^2-|{\rm Im}(q)|^2=0$, $({\rm Re}(s)-{\rm Re}(q))
{\rm Im}(q)=0$. This implies
${\rm Re}(s)={\rm Re}(q)$ and so
$|s|=|q|$ i.e. the singularities consist of the whole
2-sphere $\Sigma_q$. Consider the plane $L_I$, $I\not=I_q$: it intersects the 2-sphere
$\Sigma_q$ in ${\rm Re}(q)\pm I|{\rm Im}(q)|$. When $I=I_q$, or $q$ is real, then $q$ and $s$ commute, so $S^{-1}_q(s)=-(q-s)^{-1}
(q-\bar s)^{-1} (q-\bar s)=(q-s)^{-1}$ and the statement follows.
\end{proof}

\subsection{The Cauchy integral formula}
In this section we will present a new Cauchy formula  (\ref{sliceformul}),
which holds for functions defined on a larger class of domains.
\begin{lemma}\label{lemma1}
Let $f$, $g$ be quaternionic valued, continuously (real) differentiable functions on an open
set $U_I$ of the plane $L_I$.  Then for every open $W\subset
U_I$ whose boundary consists of a finite number of piecewise smooth,  closed curves  we have
$$
\int_{\partial W} g ds_I f=2\int_W ((g\overline{\pp}_I)f+g(\overline{\pp}_I f)) d\sigma ,
$$
where $s=x+Iy$ is the variable on $L_I$, $ds_I=-I ds$ and
$d\sigma=dx\wedge dy$.
\end{lemma}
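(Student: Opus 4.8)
The plan is to recognize the left-hand side as the integral of a quaternion-valued $1$-form over $\partial W$ and to apply the planar Stokes (Green) theorem. Writing $s = x + Iy$ we have $ds = dx + I\,dy$, so $ds_I = -I\,ds = dy - I\,dx$. Since $dx$ and $dy$ are real and therefore commute with every quaternion, the integrand expands as $g\,ds_I\,f = (gf)\,dy - (gIf)\,dx$, a genuine quaternion-valued $1$-form on $W\subset L_I$. I would treat its four real components separately, so that the classical Green theorem applies to each, and then reassemble the result.

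Next I would compute the exterior derivative. Using $d(gf)\wedge dy = \partial_x(gf)\,dx\wedge dy$ and $d(gIf)\wedge dx = -\partial_y(gIf)\,dx\wedge dy$, one finds $d\big(g\,ds_I\,f\big) = \big[\partial_x(gf) + \partial_y(gIf)\big]\,dx\wedge dy$. I would then apply the Leibniz rule to each term, being careful to keep the noncommuting factors in their correct order: $\partial_x(gf) = (\partial_x g)f + g(\partial_x f)$ and $\partial_y(gIf) = (\partial_y g)If + gI(\partial_y f)$.

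Collecting the contributions that act on $g$ from the right and on $f$ from the left gives $\partial_x(gf) + \partial_y(gIf) = \big(\partial_x g + (\partial_y g)I\big)f + g\big(\partial_x f + I\,\partial_y f\big)$. By the definitions of the right and left Cauchy--Riemann operators this equals exactly $2(g\overline{\partial}_I)f + 2\,g(\overline{\partial}_I f)$. Hence $d(g\,ds_I\,f) = 2\big[(g\overline{\partial}_I)f + g(\overline{\partial}_I f)\big]\,d\sigma$, and integrating this identity over $W$ while invoking Stokes' theorem yields the stated formula.

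The only real obstacle is the bookkeeping forced by noncommutativity: the factors $g$, $I$, $f$ must be kept in the right order at every step, and it is essential that the scalar forms $dx, dy$ are central, so that they can be pulled freely through the quaternionic factors. Once this is respected, the \emph{right} operator $g\overline{\partial}_I$ emerges acting on $g$ (units on the right of the derivatives) while the \emph{left} operator $\overline{\partial}_I$ acts on $f$ (units on the left), matching precisely the asymmetric shape of the stated integrand. No analytic input beyond Green's theorem and the product rule is required.
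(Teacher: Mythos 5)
Your proof is correct, and it takes a genuinely different route from the paper's. The paper fixes an imaginary unit $J$ orthogonal to $I$, splits $f(s)=f_0(s)+f_1(s)J$ and $g(s)=g_0(s)+Jg_1(s)$ into $L_I$-valued components, applies the classical (complex) Stokes formula on $L_I$ to each of the four resulting products, and then recombines everything by a fairly long term-by-term expansion. You never decompose the functions at all: you view $g\,ds_I\,f=(gf)\,dy-(gIf)\,dx$ as a single quaternion-valued $1$-form, justify Stokes by splitting the \emph{form} into its four real components (legitimate precisely because $dx,dy$ are real and hence central, as you note), and compute its exterior derivative via the noncommutative Leibniz rule. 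Your computation is shorter and makes the asymmetric shape of the result structurally transparent: the right operator $g\overline{\partial}_I$ and the left operator $\overline{\partial}_I f$ appear exactly because the product rule hits $gf$ and $gIf$ with the constant unit $I$ trapped in the middle. What the paper's route buys is that every invocation of Stokes is the familiar complex Green formula in the plane $L_I$, at the cost of the auxiliary unit $J$ and the lengthier reassembly. The only detail worth stating explicitly in your version is that $\partial W$ carries its positive induced orientation, which both proofs implicitly assume.
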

\begin{proof}
Let us choose an imaginary unit $J$ orthogonal to $I$ and let us
consider $\hh$ as the algebra generated by $I,J$. Then it is
possible to write $f(s)=f_0(s)+f_1(s)J$, $g(s)=g_0(s)+Jg_1(s)$ for
suitable $L_I-$valued functions $f_i(s),g_i(s)$, $i=0,1$. We now use the usual
Stokes' theorem to write
$$
\int_{\partial W} g ds_I f=\int_{\partial W} (g_0(s)+Jg_1(s)) ds_I
(f_0(s)+f_1(s)J)
$$
$$
= \int_{\partial W} g_0f_0 ds_I +g_0f_1
ds_I J +Jg_1f_0 ds_I + J g_1f_1 ds_I J
$$
$$
=\int_W \pp_x(g_0f_0) d\sigma+\pp_y(g_0f_0) I d\sigma +\pp_x(g_0f_1)
d\sigma J+\pp_y(g_0f_1)
I d \sigma J +
$$
$$
+J\pp_x(g_1f_0)d \sigma+J\pp_y(g_1f_0) I
d\sigma  + J\pp_x( g_1f_1)d\sigma J+ J\pp_y( g_1f_1)Id\sigma J.
$$
By direct computations we get
$$
\pp_x(g_0f_0)+\pp_y(g_0f_0) I +J\pp_x(g_1f_0)+J\pp_y(g_1f_0) I
$$
$$
=(\pp_x(g_0)+\pp_y(g_0)I)f_0 +J(\pp_x(g_1)+\pp_y(g_1)I)f_0
$$
$$
+ g_0(\pp_x(f_0)+\pp_y(f_0)I) +Jg_1(\pp_x(f_0)+\pp_y(f_0)I)
$$
$$
=(g_0\overline{\pp}_I +Jg_1\overline{\pp}_I )f_0
$$
$$
+(g_0+Jg_1)(\overline{\pp}_If_0)=2(g\overline{\pp}_I)f_0+2g(\overline{\pp}_If_0)
$$
and analogously
$$
\pp_x(g_0f_1)J +\pp_y(g_0f_1)IJ+ J\pp_x( g_1f_1)J + J\pp_y( g_1f_1)IJ = 2(g\overline{\pp}_I)f_1J+2g(\overline{\pp}_If_1)J.
$$
Therefore we conclude
$$
\int_{\partial W} g ds_I f =2\int_W (g\overline{\pp}_I)f_0d\sigma+g(\overline{\pp}_If_0)d\sigma+(g\overline{\pp}_I)f_1Jd\sigma+g(\overline{\pp}_If_1)Jd\sigma
$$
$$
= 2\int_W ((g\overline{\pp}_I)f+g(\overline{\pp}_I f)) d\sigma .
$$
\end{proof}
An immediate consequence of the Lemma is the following:
\begin{corol}\label{int_nullo}
Let $f$ and $g$ be  a left regular and a right regular function, respectively, on an open set $U\in \mathbb{H}$.
For any $I\in\mathbb{S}$ and
every open $W\subset
U_I$ whose boundary consists of a finite number of piecewise smooth,  closed curves,  we have:

$$
\int_{\partial W} g ds_I f=0.
$$
\end{corol}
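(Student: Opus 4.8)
The plan is to apply Lemma~\ref{lemma1} directly, since the hypotheses on $f$ and $g$ are precisely tailored to annihilate the two terms on the right-hand side of that identity. First I would fix an imaginary unit $I\in\mathbb{S}$ and restrict both functions to the plane $L_I$, writing $U_I=U\cap L_I$, so that $f$ and $g$ become continuously (real) differentiable functions of the single variable $s=x+Iy$ on $U_I$. This is exactly the setting in which Lemma~\ref{lemma1} applies, and the hypothesis that $\partial W$ consists of finitely many piecewise smooth closed curves matches the requirement there verbatim.

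Next I would read off the consequences of the two regularity assumptions. Since $f$ is left regular, its restriction satisfies $\overline{\partial}_I f=0$ on $U_I$ by definition; since $g$ is right regular, its restriction satisfies $g\overline{\partial}_I=0$ on $U_I$. Substituting these into the integral identity of Lemma~\ref{lemma1},
$$
\int_{\partial W} g\, ds_I\, f = 2\int_W \big((g\overline{\partial}_I)f + g(\overline{\partial}_I f)\big)\, d\sigma,
$$
the first summand of the integrand vanishes because $g\overline{\partial}_I=0$, and the second vanishes because $\overline{\partial}_I f=0$. Hence the whole integrand is identically zero on $W$, and the boundary integral vanishes.

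There is essentially no obstacle here: the entire analytic content --- the conversion of the boundary integral into an area integral via Stokes' theorem, carried out in the lemma through the splitting $f=f_0+f_1 J$, $g=g_0+Jg_1$ --- has already been absorbed into Lemma~\ref{lemma1}. The only point worth verifying is the trivial matching of conventions: the operator written $g\overline{\partial}_I$ in the lemma is exactly the operator defining right regularity of $g$, and $\overline{\partial}_I f$ is the operator defining left regularity of $f$. Once these identifications are in place the corollary follows at once, for every choice of $I\in\mathbb{S}$.
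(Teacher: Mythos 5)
Your proof is correct and is exactly the argument the paper intends: the paper states the corollary as ``an immediate consequence'' of Lemma~\ref{lemma1}, with the left regularity of $f$ and right regularity of $g$ annihilating the two terms of the area integrand, precisely as you wrote.
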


We will now identify a class of domains that naturally qualify as domains of definition of regular functions.

\begin{definition}
Let $\Omega \subseteq \hh$ be a domain in $\mathbb{H}$. We say that $\Omega$ is a
\textnormal{slice domain (s-domain for short)} if $\Omega \cap \mathbb{R}$ is non empty and if $L_I\cap \Omega$ is a domain in $L_I$ for all $I \in \mathbb{S}$.
\end{definition}

A regular function defined on a s-domain has a quaternionic series expansion which converges in a small open ball centered at  a real point of the domain (see theorem \ref{serie}), and hence  it is very easy to prove
that the identity principle stated in theorem \ref{identity principle} holds for regular functions defined on s-domains. For our purpose here we will also make use of the following

\begin{definition}
Let $\Omega \subseteq \hh$. We say that $\Omega$ is
\textnormal{circular} if, for all $x+yI \in \Omega$, the whole
2-sphere $x+y\mathbb{S}$ is contained in $\Omega$.
\end{definition}

We will focus our attention on the study of regular functions defined on circular s-domains.
In fact these functions and domains turn out to be the natural setting of validity of the
Cauchy formula that we are going to present. To construct this formula, we need now to extend
to a wider class of regular functions the following value distribution property of regular
quaternionic power series proved in \cite{caterina}:

\begin{theorem}\label{values}
Let $f : B=B(0,R) \to \hh$ defined by $f(q)=\sum_{n\ge 0} q^na_n$ be a regular function. For all $x,y \in
\rr$ such that $x+y\mathbb{S} \subseteq B$ there exist  $b,c \in
\hh$ such that
\begin{equation}
f(x+yI)=b+I c
\end{equation}
for all $I \in \mathbb{S}$.
\end{theorem}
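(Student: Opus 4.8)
The plan is to expand each power $(x+yI)^n$ explicitly and to exploit the single algebraic fact that $I^2=-1$ for every $I\in\mathbb{S}$. Since the real subalgebra $\rr+\rr I$ generated by such an $I$ is isomorphic to $\cc$, the binomial theorem gives
$$
(x+yI)^n=\sum_{k=0}^n\binom{n}{k}x^{n-k}y^k I^k,
$$
and separating the even powers $I^{2m}=(-1)^m$ from the odd powers $I^{2m+1}=(-1)^m I$ shows that
$$
(x+yI)^n=\alpha_n(x,y)+I\,\beta_n(x,y),
$$
where $\alpha_n(x,y)$ and $\beta_n(x,y)$ are precisely the real and imaginary parts of the complex number $(x+iy)^n$. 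The decisive point is that $\alpha_n$ and $\beta_n$ are real and depend only on $x,y,n$, and in no way on the choice of $I$.

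Substituting this into $f(x+yI)=\sum_{n\ge 0}(x+yI)^n a_n$ and using that $I$ commutes with the real scalars $\alpha_n,\beta_n$, I would split the series as
$$
f(x+yI)=\sum_{n\ge 0}\alpha_n a_n+I\sum_{n\ge 0}\beta_n a_n ,
$$
and then set $b=\sum_{n\ge 0}\alpha_n a_n$ and $c=\sum_{n\ge 0}\beta_n a_n$. These two quaternions inherit the independence of $I$ from $\alpha_n,\beta_n$, so the identity $f(x+yI)=b+Ic$ holds simultaneously for \emph{every} $I\in\mathbb{S}$, which is exactly the claim.

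It remains only to justify that the two series defining $b$ and $c$ converge. Writing $r=\sqrt{x^2+y^2}$, the hypothesis $x+y\mathbb{S}\subseteq B(0,R)$ forces $r<R$, and from $|\alpha_n+i\beta_n|=|x+iy|^n=r^n$ we get $|\alpha_n|\le r^n$ and $|\beta_n|\le r^n$. Since $f$ is regular on $B(0,R)$, Theorem \ref{serie} guarantees that $\sum_{n\ge 0}q^na_n$ converges on the whole ball, so its radius of convergence is at least $R$ and hence $\sum_{n\ge 0}|a_n|r^n<\infty$ because $r<R$. Consequently $\sum_{n\ge 0}|\alpha_n|\,|a_n|$ and $\sum_{n\ge 0}|\beta_n|\,|a_n|$ are both dominated by $\sum_{n\ge 0}|a_n|r^n$, so both series converge absolutely to well-defined quaternions $b,c$. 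I do not expect a genuine obstacle here: the crucial step is the bookkeeping that keeps $\alpha_n,\beta_n$ manifestly real and independent of $I$, and once that is in place the convergence estimate is routine.
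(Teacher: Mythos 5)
Your proof is correct, but it takes a genuinely different route from the paper. The paper never argues at the level of power series coefficients for this statement: it quotes Theorem \ref{values} from \cite{caterina} and then establishes the stronger Representation Formula (Theorem \ref{formula}), valid on any circular s-domain, by an identity-principle argument. There one fixes $J\in\mathbb{S}$, defines the candidate function
$$
\psi(x+yI)=\frac{1}{2}\Big[f(x+yJ)+f(x-yJ)\Big]+I\,\frac{1}{2}\Big[J[f(x-yJ)-f(x+yJ)]\Big],
$$
verifies by direct differentiation that $\psi$ is regular, observes that $\psi$ coincides with $f$ on the plane $L_J$, and concludes $\psi\equiv f$ from the Identity Principle. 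Your argument instead exploits the global expansion $f(q)=\sum_{n\ge 0}q^na_n$: the binomial identity $(x+yI)^n=\alpha_n(x,y)+I\beta_n(x,y)$ inside the commutative subalgebra $\mathbb{R}+\mathbb{R}I\cong\mathbb{C}$, the key observation that $\alpha_n,\beta_n$ are real and independent of $I$, and an absolute-convergence estimate that legitimizes splitting the series into $b=\sum_{n\ge 0}\alpha_na_n$ and $c=\sum_{n\ge 0}\beta_na_n$. Each approach buys something. Yours is elementary, self-contained and constructive (it exhibits $b$ and $c$ as explicit convergent series), and it is essentially the argument one would use in \cite{caterina}; but it is tied to the existence of a power series expansion valid on the whole ball, so it cannot go beyond balls centered at real points. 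The paper's method makes no use of series and therefore extends to arbitrary circular s-domains --- exactly the generality required later for the Cauchy formula (\ref{integral}) --- and it moreover identifies $b$ and $c$ in closed form from just two values of $f$, namely $b=\frac{1}{2}[f(x+yJ)+f(x-yJ)]$ and $c=\frac{1}{2}J[f(x-yJ)-f(x+yJ)]$, rather than as sums of series.
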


\noindent We will in fact prove here the following extension of
theorem \ref{values} (see \cite{cs} for the case of slice monogenic functions):

\begin{theorem}[Representation Formula]\label{formula} Let
$f$ be an regular function on a circular domain $\Omega\subseteq  \mathbb{H}$. Choose any
$J\in \mathbb{S}$.  Then the following equality holds for all $q=x+yI \in \Omega$:
\begin{equation}\label{distribution}
f(x+yI) =\frac{1}{2}\Big[   f(x+yJ)+f(x-yJ)\Big]
+I\frac{1}{2}\Big[ J[f(x-yJ)-f(x+yJ)]\Big].
\end{equation}
Moreover, for all $x, y \in \mathbb{R}$ such that $x+y\mathbb{S} \subseteq \Omega$, there exist $b, c \in \mathbb{H}$ such that for all $K \in \mathbb{S}$ we have
\begin{equation}\label{cappa}
\frac{1}{2}\Big[   f(x+yK)+f(x-yK)\Big]=b \quad \quad {\sl and}
\quad \quad \frac{1}{2}\Big[ K[f(x-yK)-f(x+yK)]\Big]=c.
\end{equation}
\end{theorem}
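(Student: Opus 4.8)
The plan is to show that the right-hand side of \eqref{distribution}, regarded as a function of $q=x+yI$, is itself a regular function on $\Omega$ that coincides with $f$. Since $\Omega$ is circular, whenever $x+yI\in\Omega$ the whole sphere $x+y\mathbb{S}$ lies in $\Omega$; in particular the points $x+yJ$ and $x-yJ=x+(-y)J$ belong to $\Omega$, so the expression
$$
\tilde f(x+yI):=b(x,y)+I\,c(x,y),\qquad b=\tfrac12\big[f(x+yJ)+f(x-yJ)\big],\ \ c=\tfrac12 J\big[f(x-yJ)-f(x+yJ)\big],
$$
is well defined on all of $\Omega$. Once we know $\tilde f=f$, the first assertion of the theorem is exactly the identity $f=\tilde f$, and the ``moreover'' part is read off from the construction; I therefore concentrate on proving $\tilde f=f$ (this reproves and extends Theorem~\ref{values} beyond the ball case).

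First I would prove that $\tilde f$ is regular. Writing $\tilde f_I(x+yI)=b(x,y)+I\,c(x,y)$ and differentiating, one finds
$$
\Big(\frac{\partial}{\partial x}+I\frac{\partial}{\partial y}\Big)\tilde f_I
=\big(\partial_x b-\partial_y c\big)+I\big(\partial_x c+\partial_y b\big),
$$
and, since $b,c$ do not depend on $I$, this vanishes for every $I\in\mathbb{S}$ precisely when the Cauchy--Riemann type system $\partial_x b=\partial_y c$, $\partial_x c=-\partial_y b$ holds. To verify this I would use that $f$ is regular on $L_J$, i.e. the map $\phi(x,y)=f(x+yJ)$ satisfies $\partial_x\phi+J\partial_y\phi=0$; writing $x-yJ=x+(-y)J$ shows that $\psi(x,y)=f(x-yJ)=\phi(x,-y)$ satisfies the conjugate relation $\partial_x\psi-J\partial_y\psi=0$. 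Substituting $b=\tfrac12(\phi+\psi)$ and $c=\tfrac12 J(\psi-\phi)$ and using $J^2=-1$ then reduces both required identities to these two slice equations. This computation, though elementary, is the technical heart of the argument and the step most prone to sign errors, so I would carry it out carefully.

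Next I would check that $\tilde f$ and $f$ agree on the slice $L_J$. Setting $I=J$ in the definition of $\tilde f$ and using $J^2=-1$,
$$
\tilde f(x+yJ)=\tfrac12\big[f(x+yJ)+f(x-yJ)\big]+\tfrac12 J^2\big[f(x-yJ)-f(x+yJ)\big]=f(x+yJ)
$$
for all admissible $x,y$, hence $\tilde f=f$ on $L_J\cap\Omega$. Since $f$ and $\tilde f$ are both regular on $\Omega$ and coincide on the entire slice $L_J\cap\Omega$, the Identity Principle (Theorem~\ref{identity principle}, in the version valid on slice domains) gives $\tilde f\equiv f$ on $\Omega$, which is \eqref{distribution}.

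Finally, \eqref{distribution} states exactly that $f(x+yI)=b+I\,c$ with $b,c$ depending only on $x,y,J$ and not on $I$. Taking $I=-K\in\mathbb{S}$ gives $f(x-yK)=b-K\,c$, so for any $K\in\mathbb{S}$
$$
\tfrac12\big[f(x+yK)+f(x-yK)\big]=b,\qquad \tfrac12\,K\big[f(x-yK)-f(x+yK)\big]=-K^2c=c,
$$
both expressions being independent of $K$; this is \eqref{cappa}. The main obstacle is the regularity verification for $\tilde f$: checking the Cauchy--Riemann system for the quaternion-valued $b,c$ hinges on correctly transporting the regularity of $f$ through the reflection $y\mapsto -y$, and it is here that the non-commutativity of $\mathbb{H}$ must be handled with care; a secondary point is that the passage from slice agreement to global equality uses the Identity Principle in its slice-domain form.
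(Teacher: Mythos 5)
Your proof is correct and takes essentially the same route as the paper's: you define the auxiliary function $b(x,y)+I\,c(x,y)$ from the values of $f$ on $L_J$, check its regularity slice by slice, observe that it coincides with $f$ on $L_J$, conclude by the Identity Principle (in its slice-domain form, exactly as the paper implicitly does), and then obtain the ``moreover'' part by specializing $I=\pm K$. The Cauchy--Riemann verification you deferred does go through as you indicate: from $\partial_x\phi=-J\partial_y\phi$ and $\partial_x\psi=J\partial_y\psi$ one gets $\partial_x b=\partial_y c$ and $\partial_x c=-\partial_y b$ directly.
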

\begin{proof} If ${\rm Im}(q)=0$ is real, the proof is immediate. Otherwise
let us define the function $\psi:\Omega \to \mathbb{H}$ as follows
$$
\psi(q)=\frac{1}{2}\Big[   f({\rm Re}(q) +|{\rm Im}(q)|J)+f({\rm Re}(q) - |{\rm
Im}(q)|J)
$$
$$
+\frac{{\rm Im}(q)}{|{\rm Im}(q)|}J[f({\rm Re}(q) - |{\rm Im}(q)|J)-f({\rm Re}(q) + |{\rm
Im}(q)|J)]\Big].
$$
Using the fact that $q=x+yI$, $x,y\in\mathbb{R}$, $y\geq 0$ and
$I=\displaystyle\frac{{\rm Im}(q)}{|{\rm Im}(q)|}$ we obtain
$$
\psi(x+yI)=\frac{1}{2}\Big[   f(x+yJ)+f(x-yJ) +I
J[f(x-yJ)-f(x+yJ)]\Big].
$$
Observe that on $L_J$ (i.e. for $I=J$) we have
$$
\psi_J(q)=\psi(x+yJ)=f(x+yJ)=f_J(q).
$$
Therefore if we prove that $\psi$ is regular on $\Omega$,
the first part of the assertion will follow from the Identity Principle
for regular functions.
Indeed, since $f$ is regular on $\Omega$, for any $I\in \mathbb{S}$ we have,
on $\Omega\cap L_{I}$
$$
\frac{\partial}{\partial x}2\psi(x+yI)
=\frac{\partial}{\partial x} \Big[f(x+yJ)+f(x-yJ) +I J[f(x-yJ)-f(x+yJ)]\Big]
$$
$$
=\frac{\partial}{\partial x} f(x+yJ)+\frac{\partial}{\partial x} f(x-yJ) +IJ[\frac{\partial}{\partial x} f(x-yJ)-\frac{\partial}{\partial x} f(x+yJ)]
$$
$$
=-J\frac{\partial}{\partial y} f(x+yJ)+J\frac{\partial}{\partial y} f(x-yJ) +I J[J\frac{\partial}{\partial y} f(x-yJ)+J\frac{\partial}{\partial y} f(x+yJ)]
$$
$$
=-J\frac{\partial}{\partial y} f(x+yJ)+J\frac{\partial}{\partial y} f(x-yJ) -I [\frac{\partial}{\partial y} f(x-yJ)+\frac{\partial}{\partial y} f(x+yJ)]
$$
$$
=-I\frac{\partial}{\partial y} \Big[f(x+yJ)+f(x-yJ) +I J[f(x-yJ)-f(x+yJ)]\Big]=-I\frac{\partial}{\partial y}2\psi(x+yI)
$$
i.e.
\begin{equation}\label{reg}
\frac{1}{2}(\frac{\partial}{\partial x}+I\frac{\partial}{\partial
y})\psi(x+yI)=0.
\end{equation}
To prove (\ref{cappa}) we take any $K\in \mathbb{S}$ and use
equation (\ref{distribution}) to show that
$$
\frac{1}{2}\Big[   f(x+yK)+f(x-yK)\Big]
$$
$$
=\frac{1}{2}\Big\{\frac{1}{2}\Big[   f(x+yJ)+f(x-yJ)\Big] +K\frac{1}{2}\Big[ J[f(x-yJ)-f(x+yJ)]\Big]
$$
$$
+\frac{1}{2}\Big[   f(x+yJ)+f(x-yJ)\Big] -K\frac{1}{2}\Big[ J[f(x-yJ)-f(x+yJ)]\Big]\Big\}
$$
$$
=\frac{1}{2}\Big[   f(x+yJ)+f(x-yJ)\Big]
$$
and that
$$
\frac{1}{2}\Big[ K[f(x-yK)-f(x+yK)]\Big]
$$
$$
=\frac{1}{2}K\Big\{\frac{1}{2}\Big[   f(x+yJ)+f(x-yJ)\Big] -K\frac{1}{2}\Big[ J[f(x-yJ)-f(x+yJ)]\Big]
$$
$$
-\frac{1}{2}\Big[   f(x+yJ)+f(x-yJ)\Big] -K\frac{1}{2}\Big[ J[f(x-yJ)-f(x+yJ)]\Big]\Big\}
$$
$$
=\frac{1}{2}K\Big[-K\Big[ J[f(x-yJ)-f(x+yJ)]\Big]
$$
$$
=\frac{1}{2}\Big[ J[f(x-yJ)-f(x+yJ)]\Big].
$$
With these two last equalities, the proof is completed.
\end{proof}
The Representation Formula plays a key role in the study of the
theory of regular functions on circular s-domains and we provide
here some of its important consequences. Further developments will
be the subject of a forthcoming paper.
\begin{corol}\label{hartogs}
An regular function $f : \Omega\to\mathbb{H}$ on a circular
s-domain is infinitely differentiable on $\Omega$.
\end{corol}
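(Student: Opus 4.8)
The plan is to use the just-proved Representation Formula (Theorem \ref{formula}) to reduce the global smoothness of $f$ to the elementary smoothness of its restriction to a single slice $L_J$, the only delicate point being the behavior across the real axis, where the chart $q \mapsto (\mathrm{Re}(q), |\mathrm{Im}(q)|, I_q)$ degenerates.

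First I would fix $J \in \mathbb{S}$ and record that the restriction $f_J = f|_{\Omega \cap L_J}$ is holomorphic for the complex structure induced by $J$ (it lies in the kernel of $\overline{\pp}_J$), hence real-analytic and in particular $C^\infty$ on the planar domain $\Omega \cap L_J$. Writing the two building blocks of (\ref{distribution}),
$$b(x,y) = \tfrac12[f(x+yJ)+f(x-yJ)], \qquad c(x,y)=\tfrac12 J[f(x-yJ)-f(x+yJ)],$$
I would note that circularity of $\Omega$ guarantees that whenever $x+yI \in \Omega$ the whole sphere $x+y\mathbb{S}$, and in particular $x \pm yJ$, lies in $\Omega$; thus $\Omega \cap L_J$ is symmetric under $y \mapsto -y$, and $b, c$ are $C^\infty$ functions of $(x,y)$ on this symmetric domain. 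The key structural observation is that $b$ is \emph{even} in $y$ while $c$ is \emph{odd} in $y$, which is immediate upon exchanging $x+yJ \leftrightarrow x-yJ$.

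Next I would absorb the coordinate degeneracy at the real axis. By the standard lemma on smooth even and odd functions (Whitney), the parities give $b(x,y) = B(x,y^2)$ and $c(x,y) = y\,C(x,y^2)$ for $C^\infty$ functions $B, C$; crucially $c(x,y)/y = C(x,y^2)$ extends smoothly across $y=0$. For non-real $q$, setting $x=\mathrm{Re}(q)$, $y=|\mathrm{Im}(q)|>0$ and $I_q = \mathrm{Im}(q)/y$, formula (\ref{distribution}) reads
$$f(q) = b(x,y) + I_q\, c(x,y) = B\big(x,|\mathrm{Im}(q)|^2\big) + \mathrm{Im}(q)\, C\big(x,|\mathrm{Im}(q)|^2\big).$$
The right-hand side is a composition of $B, C$ with the maps $q \mapsto \mathrm{Re}(q)$, $q \mapsto |\mathrm{Im}(q)|^2 = x_1^2+x_2^2+x_3^2$ and $q \mapsto \mathrm{Im}(q)$, all polynomial and hence $C^\infty$ on all of $\mathbb{H}$; moreover at a real point $q=x_0$ it reduces to $B(x_0,0) = f(x_0)$. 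Thus this expression is a $C^\infty$ function on $\Omega$ agreeing with $f$, and the corollary follows.

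I expect the only real obstacle to be the passage across $\mathbb{R}$: on $\Omega \setminus \mathbb{R}$ smoothness is clear from the smoothness of the chart $q \mapsto (\mathrm{Re}(q),|\mathrm{Im}(q)|,I_q)$, but $I_q$ and $1/y$ are singular on the real axis. The parity argument is exactly what removes this singularity, by rewriting everything through $y^2$ and the polynomial map $\mathrm{Im}$. As a cross-check one could instead invoke Theorem \ref{serie} in a ball centered at each real point of $\Omega$ to obtain $C^\infty$ smoothness there directly, but the parity computation has the advantage of being self-contained and of yielding even the real-analyticity of $f$.
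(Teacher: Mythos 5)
Your proof is correct, and while it starts from the same key tool as the paper --- the Representation Formula (Theorem \ref{formula}) --- it diverges in the one genuinely delicate step, the behavior at the real axis, and there your route is different. The paper splits the argument in two: near real points it invokes Theorem \ref{serie} (every regular function has a convergent power series expansion in a ball centered at a real point, hence is $C^\infty$ there), and off the real axis it writes $f$ as a composition of $f_J$ with the maps ${\rm Re}(q)$, ${\rm Im}(q)$, $|{\rm Im}(q)|$, which are smooth only away from $\mathbb{R}$. You instead treat $\Omega$ uniformly: the parity observation ($b$ even, $c$ odd in $y$, which is immediate from their definitions) combined with Whitney's even/odd function theorem lets you rewrite formula (\ref{distribution}) as $f(q)=B\bigl({\rm Re}(q),|{\rm Im}(q)|^2\bigr)+{\rm Im}(q)\,C\bigl({\rm Re}(q),|{\rm Im}(q)|^2\bigr)$, a composition of smooth functions with polynomials in the real coordinates of $q$, so the degeneracy of the chart at $y=0$ disappears and no case distinction is needed. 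The trade-off is clear: the paper's proof is shorter and self-contained within its own results, whereas yours imports a nontrivial classical theorem (Whitney, with parameters, on a symmetric domain --- legitimate, since circularity of $\Omega$ makes $V_J$ symmetric under $y\mapsto -y$); in exchange, your argument needs no power series input at all, and, using the real-analytic version of Whitney's theorem together with the real-analyticity of the holomorphic components of $f_J$, it yields the strictly stronger conclusion that $f$ is real-analytic on all of $\Omega$, not merely $C^\infty$. One small point you share with the paper rather than improve on: both proofs silently use that regularity of $f$ forces $f_J$ to be smooth (via the splitting $f_J=F+GJ_2$ into holomorphic $L_J$-valued components), which strictly speaking deserves the one-line justification you give parenthetically.
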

\begin{proof}
 The differentiability of $f$
on  the real axis follows from Theorem \ref{serie} since for
any point of the real axis there is a ball in which the function
$f$ can be expressed in power series.  To prove differentiability
outside the real axis consider formula (\ref{distribution}) in
terms of $q=x_0+ix_1+jx_2+kx_3$, namely
$$
f(q)=\frac{1}{2}\Big[   f({\rm Re}(q) +|{\rm Im}(q)|J)+f({\rm
Re}(q) - |{\rm Im}(q)|J)
$$
$$
+\frac{{\rm Im}(q)}{|{\rm Im}(q)|}J[f({\rm Re}(q) - |{\rm
Im}(q)|J)-f({\rm Re}(q) + |{\rm Im}(q)|J)]\Big].
$$
Notice that the function $f$ is regular and  hence infinitely differentiable on $L_J$.
It is therefore obvious that $f$ can be obtained as a composition of the functions
$f_J$,
${\rm Re}(q)$, ${\rm Im}(q)$ and $|{\rm Im}(q)|$ which are all infinitely differentiable
outside the real axis with respect to the variables
$x_\ell$, $\ell=0,\ldots ,3$. This concludes the proof. \end{proof}

\noindent The following results are geometry-flavored consequences of  the above theorem:

\begin{corol}
\label{values general}
Let $\Omega\in \mathbb{H}$  be a circular s-domain and let $f : \Omega \to \hh$
be a regular function.
\begin{enumerate}
\item For all $x,y \in
\rr$ such that $x+yI\in \Omega$ there exist $b, c\in \mathbb{H}$ such that
\begin{equation}\label{b e c}
f(x+yI)=b+Ic
\end{equation}
for all $I\in \mathbb{S}$. In particular the image $f(x+y\mathbb{S})$ of the $2-$sphere $x+y\mathbb{S}$ is the $2-$sphere $b+\mathbb{S}c$.
\item If $f(x+yJ)=f(x+yK)$ for $I\neq K$ in $\mathbb{S}$, then $f$ is constant on $x+y\mathbb{S}$;
In particular, if $f(x+yJ)=f(x+yK)=0$ for $I\neq K$ in $\mathbb{S}$, then $f$ vanishes on the entire $2-$sphere $x+y\mathbb{S}$.
\end{enumerate}
\end{corol}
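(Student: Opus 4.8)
The plan is to read both statements directly off the Representation Formula (Theorem~\ref{formula}), since this corollary is essentially a repackaging of~(\ref{distribution}) together with the independence property~(\ref{cappa}). Note first that because $\Omega$ is circular, the hypothesis $x+yI\in\Omega$ already forces the whole $2$-sphere $x+y\mathbb{S}$ to lie in $\Omega$, so in particular $x\pm yJ\in\Omega$ and Theorem~\ref{formula} applies for every auxiliary unit $J\in\mathbb{S}$.

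For part~(1) I would fix such $x,y$, choose any $J\in\mathbb{S}$, and set
$$
b=\frac12\big[f(x+yJ)+f(x-yJ)\big],\qquad
c=\frac12\big[J[f(x-yJ)-f(x+yJ)]\big].
$$
Then formula~(\ref{distribution}) reads exactly $f(x+yI)=b+Ic$ for every $I\in\mathbb{S}$, which is~(\ref{b e c}); and~(\ref{cappa}) guarantees that $b$ and $c$ do not depend on the auxiliary choice of $J$, so they are genuinely attached to the sphere $x+y\mathbb{S}$. The geometric refinement then amounts to describing the set $\{b+Ic:I\in\mathbb{S}\}$. Right multiplication by the fixed quaternion $c$ is an $\rr$-linear similarity of $\hh$ (it scales every length by $|c|$ and fixes the origin), so it carries the $2$-sphere $\mathbb{S}\subset\mathrm{Im}\,\hh$ onto the $2$-sphere $\mathbb{S}c$ of radius $|c|$ centered at $0$; translating by $b$ yields $f(x+y\mathbb{S})=b+\mathbb{S}c$, which degenerates to the single point $\{b\}$ precisely when $c=0$. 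I expect this geometric identification to be the only point requiring a line of justification, the rest being immediate from~(\ref{distribution}).

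For part~(2) I would apply~(\ref{b e c}) to the two chosen units, writing $f(x+yJ)=b+Jc$ and $f(x+yK)=b+Kc$ with $J\neq K$. The hypothesis $f(x+yJ)=f(x+yK)$ then forces $Jc=Kc$, i.e. $(J-K)c=0$; since $J\neq K$ the quaternion $J-K$ is nonzero, hence invertible, so $c=0$. Consequently $f(x+yI)=b$ for all $I\in\mathbb{S}$, which is exactly the assertion that $f$ is constant on $x+y\mathbb{S}$. The special case follows at once: if moreover $f(x+yJ)=f(x+yK)=0$, then $b=f(x+yJ)=0$, so $f$ vanishes identically on the sphere $x+y\mathbb{S}$.
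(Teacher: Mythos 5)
Your proposal is correct and follows essentially the same route as the paper: part (1) is read directly off the Representation Formula (Theorem \ref{formula}), with $b$ and $c$ given by (\ref{distribution}) and their independence of the auxiliary unit by (\ref{cappa}), and part (2) is obtained by observing that the hypothesis forces $c=0$ in (\ref{b e c}). The paper's own proof is just a two-line version of this; your additional details (the invertibility of $J-K$ in $\mathbb{H}$, and the similarity argument identifying $\{b+Ic : I\in\mathbb{S}\}$ with the $2$-sphere $b+\mathbb{S}c$) are exactly the steps it leaves implicit.
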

\begin{proof}
The proof of {\sl1.} is a direct application of theorem \ref{formula}. To prove {\sl 2.} notice that $f(x+yJ)=f(x+yK)$ for $I\neq K$ implies $c=0$ in (\ref{b e c}), and hence the assertion follows.
\end{proof}
We can now prove the new version of the Cauchy formula, which makes use
of the class of circular s-domains  naturally containing all the
singularities of the regular kernel (for the case of slice monogenic functions see \cite{cs}).

\begin{theorem}
Let $\Omega \subseteq \hh$ be a circular s-domain  such that $\pp
(\Omega \cap L_I)$ is union of a finite number of rectifiable
Jordan arcs. Let $f$ be a regular function on $\Omega$ and, for
any $I\in \mathbb{S}$,  set  $ds_I=-Ids$. Then for every
$q\in\Omega$ we have:
\begin{equation}\label{integral}
 f(q)=\frac{1}{2 \pi}\int_{\pp (\Omega \cap L_I)} -(q^2-2{\rm Re}(s)q+|s|^2)^{-1}
 (q-\overline{s})ds_I f(s).
\end{equation}
Moreover
the value of the integral depends neither on $\Omega$ nor on the  imaginary unit
$I\in\mathbb{S}$.
\end{theorem}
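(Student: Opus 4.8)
The plan is to start from the classical slice Cauchy formula (\ref{sliceformul1}) and transport it from the disc $\Delta_q(0,r)\subset L_{I_q}$ to the boundary of an \emph{arbitrary} slice $\Omega\cap L_I$, collecting the residues produced by the singularities of the kernel. First I would dispose of the case $q\in\rr$: there $S^{-1}(s,q)=(s-q)^{-1}$ has the single pole $q$, which lies on every $L_I$, and the statement reduces to the ordinary one-variable Cauchy formula on $\Omega\cap L_I$, manifestly independent of $I$. So assume $q=x+yI_q$ with $y=|{\rm Im}(q)|>0$. By the proposition describing the singularities of $S^{-1}_q(s)$, its poles on the plane $L_I$ are the two points $s_\pm=x\pm yI$ of the sphere $\Sigma_q$; since $\Omega$ is circular the whole sphere $\Sigma_q$ lies in $\Omega$, so both $s_\pm\in\Omega\cap L_I$.

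Next I would justify a contour deformation. The kernel $s\mapsto S^{-1}_q(s)$ is right regular on $\hh\setminus\Sigma_q$ and $f$ is left regular, so on the region $W$ obtained from $\Omega\cap L_I$ by deleting two small discs around $s_+$ and $s_-$, Corollary \ref{int_nullo} gives $\int_{\pp W}S^{-1}_q\,ds_I f=0$. Hence the integral over $\pp(\Omega\cap L_I)$ equals the sum of the two integrals over small positively oriented circles $\gamma_\pm$ encircling $s_\pm$. For the residues I would use the second expression (\ref{second}) of the kernel, $S^{-1}_q(s)=(s-\bar q)(s^2-2{\rm Re}(q)s+|q|^2)^{-1}$, whose denominator has real coefficients and factors on $L_I$ as $(s-s_+)(s-s_-)$. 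A partial-fraction split gives
\[
S^{-1}_q(s)=(s-\bar q)\,\tfrac{-I}{2y}\Big(\tfrac{1}{s-s_+}-\tfrac{1}{s-s_-}\Big),
\]
where $(s-s_\pm)^{-1}$, $ds_I$ and $\tfrac{-I}{2y}$ all lie in $L_I$ and commute, while the quaternionic factor $(s-\bar q)$ sits on the left. Writing $(s-\bar q)=(s_\pm-\bar q)+(s-s_\pm)$ on $\gamma_\pm$, the second summand kills the pole, so only the constant left factor survives; splitting $f=f_0+f_1J$ with $J\perp I$ reduces each $\gamma_\pm$-integral to a pair of ordinary $L_I$-Cauchy integrals, and I expect to obtain
\[
\frac{1}{2\pi}\int_{\pp(\Omega\cap L_I)}S^{-1}_q(s)\,ds_I f(s)=(s_+-\bar q)\tfrac{-I}{2y}f(s_+)+(s_--\bar q)\tfrac{I}{2y}f(s_-).
\]

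Using $s_\pm-\bar q=y(\pm I+I_q)$ and $I^2=-1$, the coefficients collapse to $\tfrac{1-I_qI}{2}$ and $\tfrac{1+I_qI}{2}$, so the right-hand side becomes $\tfrac12[f(x+yI)+f(x-yI)]+I_q\tfrac12 I[f(x-yI)-f(x+yI)]$. Finally I would recognize this as exactly the Representation Formula (\ref{distribution}) of Theorem \ref{formula}, taken with base unit $J=I$ and evaluated at $x+yI_q=q$; hence it equals $f(q)$. Since the computation is carried out for an arbitrary $I\in\si$ and yields the $I$-independent value $f(q)$, the independence of the integral from $I$ follows at once, and independence from $\Omega$ is then automatic.

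The step I expect to be the main obstacle is the residue computation, precisely because of noncommutativity: one must keep $(s-\bar q)$ to the left of the scalar $L_I$-valued kernel, confirm that the scalar Cauchy formula applies to the quaternion-valued integrand via the $f=f_0+f_1J$ decomposition, and track the orientation/signs so that the two coefficients come out as the projectors $\tfrac{1\mp I_qI}{2}$. A useful consistency check along the way is the case $I=I_q$, where $s_--\bar q=0$ makes the second residue coefficient vanish, matching the fact that $S^{-1}_q$ has only the single singularity $q$ on $L_{I_q}$.
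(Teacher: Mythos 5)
Your proposal is correct, and it shares its skeleton with the paper's proof: both arguments reduce the integral over $\partial(\Omega\cap L_I)$ to the contributions of the two points $s_\pm=x\pm yI$ where the sphere $\Sigma_q$ meets $L_I$, both obtain the same coefficients $\frac{1}{2}(1\mp I_qI)$, and both conclude by identifying the resulting sum with the right-hand side of the Representation Formula (\ref{distribution}) evaluated at $q=x+yI_q$. The genuine difference lies in how the residues are computed. The paper parametrizes small circles $s=x+yI+\varepsilon e^{I\theta}$ and passes to the limit $\varepsilon\to 0$ through an explicit trigonometric calculation. You instead invoke the right-regular form of the kernel, $S^{-1}(s,q)=(s-\overline{q})(s^2-2{\rm Re}(q)s+|q|^2)^{-1}$ (identity (\ref{second}), which the paper itself exploits only to prove independence of $\Omega$), factor the real-coefficient denominator over the commutative plane $L_I$ as $(s-s_+)(s-s_-)$, perform a partial-fraction decomposition there, freeze the quaternionic left factor at its value $s_\pm-\overline{q}$ at each pole, and apply the classical one-variable Cauchy formula to the components of the splitting $f=f_0+f_1J$ (the same splitting used in the proof of Lemma \ref{lemma1}). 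This is sound: all singular factors lie in $L_I$ and hence commute with $ds_I$, while the leftover noncommutative factor contributes only through its value at the pole, the difference $(s-s_\pm)$ cancelling the singularity. Your route is shorter and makes the appearance of the projectors $\frac{1}{2}(1\mp I_qI)$ transparent, as $(s_\pm-\overline{q})\,(\mp I/(2y))$, whereas the paper's limit computation is more self-contained but leaves those coefficients looking accidental. Two minor points: your consistency check at $I=I_q$ (second coefficient vanishes, matching the fact that the kernel has a single singularity on $L_{I_q}$) agrees with the paper's proposition on singularities; and note that the paper's displayed identity (\ref{second}) and the remark following Proposition \ref{regularboth} carry a typo ($-{\rm Re}(q)s$ should read $-2{\rm Re}(q)s$), so the version with the factor $2$ that you used is the correct one.
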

\begin{proof}
First of all, the integral does not depend on the open set $\Omega$.
This follows from the right regularity of $S^{-1}(s,q)$ with respect to  the variable $s$
(see equality (\ref{second})
and Corollary \ref{int_nullo}).

Let us show that the integral does not depend on the choice of the
imaginary unit $I\in\mathbb{S}$.
If we denote $q=x+yI_q\in\Omega$, then
 the
set of the zeroes of the function $q^2-2{\rm Re}(s)q+|s|^2=0$ consists of
a real point (counted twice) or a 2-sphere. If the zeroes are not
real, on any complex plane $L_I$ we find the two zeroes
$s_{1,2}=x\pm y I$. When the singularity is a real
number, the integral reduces to the classical Cauchy integral formula for
holomorphic maps. Thus we consider the case of nonreal zeroes and we
calculate the residues about the points $s_{1}$ e $s_{2}$. Let us
start with $s_{1}=x+yI$ setting the positions
$$
s=x +y I+\varepsilon e^{I\theta},
$$
$$
{\rm Re}(s)=x+\varepsilon \cos\theta,
$$
$$
\overline{s}=x -y I+\varepsilon e^{-I\theta},
$$
$$
ds_I=-I [\varepsilon I e^{I\theta}]  d\theta=\varepsilon
e^{I\theta}  d\theta,
$$
$$
|s|^2=x^2+2x\varepsilon \cos\theta+\varepsilon^2+y^2+2y\varepsilon \sin\theta .
$$
We now compute the integral which appears at the right hand side of (\ref{integral})
along the circle with center at $s_1$ and radius
$\varepsilon >0$ on the plane $L_I$:
$$
2\pi I_{1}^\varepsilon=\int_{0}^{2\pi}-(-2q\varepsilon \cos\theta+
2x \varepsilon\cos\theta+\varepsilon^2+2 y \varepsilon
\sin\theta )^{-1}
(q-[x -y I+\varepsilon
e^{-I\theta}])\varepsilon  e^{I\theta}  d\theta f(x +y I+\varepsilon e^{I\theta})
$$
$$
=\int_{0}^{2\pi}-(-2q \cos\theta+
2x \cos\theta+\varepsilon+2 y
\sin\theta )^{-1}
(q-[x -y I+\varepsilon
e^{-I\theta}])  e^{I\theta}  d\theta f(x +y I+\varepsilon e^{I\theta}).
$$
For $\varepsilon\to 0$ we get an expression $I_1^0$ for the residue at $s_1$
$$
2\pi I_{1}^0=\int_{0}^{2\pi}(2q \cos\theta- 2x
\cos\theta-2y \sin\theta )^{-1} (y I_q +y I) e^{I\theta}  d\theta f(x +y I)
$$
$$
=\frac{1}{2}\int_{0}^{2\pi}(y\cos\theta I_q -y \sin\theta )^{-1} (y I_q +y I) e^{I\theta}  d\theta
f(x +y I)
$$
$$
=-\frac{1}{2y^2}\int_{0}^{2\pi}(y
\cos\theta I_q+y \sin\theta ) (y I_q +y I)
[\cos \theta+I\sin\theta]  d\theta f(x +y I)
$$
$$
=-\frac{1}{2y^2} \int_{0}^{2\pi} [(y I_q)^2
\cos\theta+y^2\sin\theta  I_q +y^2\cos\theta I_q I  +  y^2 \sin\theta I ]
[\cos \theta+I\sin\theta]  d\theta f(x +y I)
$$
$$
=-\frac{1}{2} \int_{0}^{2\pi} [-
\cos\theta+\sin\theta  I_q +\cos\theta I_q I  +   \sin\theta I ]
[\cos \theta+I\sin\theta]  d\theta f(x +y I)
$$
$$
=-\frac{1}{2} \int_{0}^{2\pi} [-\cos^2\theta-\cos\theta \sin\theta  I+\cos\theta \sin\theta  I_q
+\sin^2\theta I_q I +\cos^2\theta I_q I
$$
$$
-\cos\theta \sin\theta  I_q+\cos\theta \sin\theta  I
-   \sin^2\theta  ]
  d\theta f(x +y I)
$$
$$
=-\frac{1}{2} \int_{0}^{2\pi}
\Big[-1 +   I_q I
 \Big]d\theta f(x +y I)
$$
$$
 ={\pi}
\Big[1 -   I_q I \Big] f(x +y I).
$$
So we get the first
residue
$$
I_{1}^0 = \frac{1}{2}\Big[1 -  I_q I\Big]f(x +y I).
$$
With analogous calculation we prove that the residue about $s_2$
is
$$
I_{2}^0 = \frac{1}{2}\Big[1 +  I_q I\Big]f(x -y I).
$$
By the classical  residues theorem used in the complex
plane $L_I$, we have:
$$
\frac{1}{2 \pi}\int_{\pp (U\cap L_I)} S^{-1}(s,q)ds_I f(s)
=I_{1}^0+I_{2}^0.
$$
Since
$$
I_{1}^0+I_{2}^0=\frac{1}{2}\Big[1 -  I_q I\Big]f(x +y I) +\frac{1}{2}\Big[1 +  I_q I\Big]f(x -y I)
$$
$$
=\frac{1}{2}\Big[   f(x+yI)+f(x-yI)\Big] +I_q\frac{1}{2}\Big[ I[f(x-yI)-f(x+yI)]\Big],
$$
the statement now follows from Lemma \ref{formula}.
\end{proof}

\begin{corol}
Let $I\in\mathbb{S}$ and let $\Omega_I$ be a domain in $L_I$, symmetric with respect to the real
axis and whose boundary is a finite union of closed Jordan arcs. Suppose that $\Omega_I\cap \mathbb{R}$ is nonempty. Let $J\in\mathbb{S}$ be orthogonal to $I$,
 let $F,G:\ \Omega_I \to L_I$ be holomorphic functions and let $f(x+yI)=F(x+yI)+G(x+yI)J$.
 If
 $$\Omega=\bigcup_{x+yI\in\Omega_I} (x+y\mathbb{S})$$ then the function defined by
$$
 \tilde f(q)=\frac{1}{2 \pi}\int_{\pp \Omega_I} -(q^2-2Re(s)q+|s|^2)^{-1}
 (q-\overline{s})ds_I f(s)
 $$
is the regular extension of $f$ to $\Omega$.
\end{corol}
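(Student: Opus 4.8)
The plan is to establish two facts — that $\tilde f$ is regular on $\Omega$, and that its restriction to $\Omega_I=\Omega\cap L_I$ coincides with $f$ — and then to invoke the Identity Principle (Theorem \ref{identity principle}, which holds on s-domains) to conclude that $\tilde f$ is the unique regular extension of $f$ to $\Omega$.

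For regularity, I would fix $s\in\pp\Omega_I$ and view $q\mapsto S^{-1}(s,q)$, given by (\ref{esse-1}), as a function of $q$. By Proposition \ref{regularboth} it is left regular in $q$ wherever it is defined, its only singularities forming the $2$-sphere $\Sigma_s={\rm Re}(s)+\mathbb{S}|{\rm Im}(s)|$ on which the factor $q^2-2{\rm Re}(s)q+|s|^2$ vanishes. Since $s\in\pp\Omega_I$ and $\Omega$ is the circularization of $\Omega_I$, the whole sphere $\Sigma_s$ lies on $\pp\Omega$, hence is disjoint from the open set $\Omega$; so $q\mapsto S^{-1}(s,q)$ is smooth and regular on $\Omega$ for every $s\in\pp\Omega_I$. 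As $\pp\Omega_I$ is compact and $f$ is continuous, for each $I'\in\mathbb{S}$ I would differentiate under the integral sign, applying $\ov{\pp}_{I'}=\frac12(\pp_x+I'\pp_y)$; since $\ov{\pp}_{I'}S^{-1}(s,\cdot)=0$ on each slice, this yields $\ov{\pp}_{I'}\tilde f=0$, i.e. $\tilde f$ is regular on $\Omega$.

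For the restriction, I would take $q=x+yI\in\Omega_I$ and $s\in L_I$. Then $q$ and $s$ commute, so $q^2-2{\rm Re}(s)q+|s|^2=(q-s)(q-\ov s)$ and (\ref{esse-1}) collapses to $S^{-1}(s,q)=(s-q)^{-1}$. Writing $f(s)=F(s)+G(s)J$ with $F,G$ holomorphic and $L_I$-valued, the integral splits into two pieces. In the first every factor lies in the commutative plane $L_I$, so treating $I$ as the imaginary unit (whence $\frac{1}{2\pi I}=\frac{-I}{2\pi}$) the ordinary holomorphic Cauchy formula returns $F(q)$; in the second, $J$ factors out to the right and the same computation returns $G(q)J$. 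Hence $\tilde f|_{\Omega_I}=F+GJ=f$.

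Finally I would observe that $\Omega$ is a circular s-domain — it meets $\rr$ since $\Omega_I\cap\rr\neq\emptyset$, it is circular by construction, and each $L_{I'}\cap\Omega$ is a domain isometric to $\Omega_I$ — so that the Identity Principle applies: a regular function on $\Omega$ agreeing with $f$ on $\Omega_I$, which has accumulation points in $L_I$, is uniquely determined, and $\tilde f$ is that extension. I expect the main obstacle to be the regularity step, namely checking that the singularity spheres $\Sigma_s$ indeed stay off $\Omega$ as $s$ ranges over $\pp\Omega_I$ and rigorously justifying differentiation under the integral; the restriction step, by contrast, reduces cleanly to the classical Cauchy formula on the single slice $L_I$.
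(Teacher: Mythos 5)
Your proof is correct, but it cannot be compared with the paper line by line for a simple reason: the paper states this result as a corollary of the preceding Cauchy theorem and supplies no proof at all. The derivation the paper evidently has in mind is different from yours: since $F$ and $G$ are holomorphic, the slice function $f=F+GJ$ satisfies $\pp_x f_I=-I\pp_y f_I$, so the construction used in the proof of the Representation Formula (Theorem \ref{formula}) produces a regular extension $g$ of $f$ to the circular s-domain $\Omega$; applying the already proven Cauchy formula to $g$, whose values on $\pp\Omega_I$ coincide with those of $f$, shows that the integral defining $\tilde f$ equals $g$, hence is the regular extension. You instead verify everything directly: regularity of $\tilde f$ from the left regularity of $q\mapsto S^{-1}(s,q)$, together with the check that the singular spheres $\Sigma_s$, $s\in\pp\Omega_I$, are disjoint from $\Omega$ (this step is right, and it genuinely needs both the circularity of $\Omega$ and the symmetry of $\Omega_I$, exactly as you use them) and differentiation under the integral sign; then the restriction to $L_I$ via the commutative collapse $S^{-1}(s,q)=(s-q)^{-1}$ and two applications of the classical Cauchy formula, to $F$ and to $GJ$ separately; then uniqueness from the Identity Principle on s-domains. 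The trade-off is clear: the paper's implicit route is shorter, since regularity of the integral comes for free from the Cauchy theorem, but it presupposes constructing the extension beforehand; your route is self-contained and in fact re-proves the existence of the extension, at the cost of justifying the differentiation under the integral. As a small bonus, your slice computation gets the sign right where the paper's proposition on singularities has a slip: it writes $(q-s)^{-1}$ for the commutative collapse of the kernel, whereas the correct value, which you use, is $(s-q)^{-1}$.
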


\begin{proposition}[Derivatives using the regular Cauchy kernel]
Let $U\subset\mathbb{H}$ be a circular s-domain.
Suppose
$\partial (U\cap L_I)$ is a finite union of
rectifiable Jordan curves  for every $I\in\mathbb{S}$.
Let $f$ be an regular function on $U$ and set  $ds_I=ds/ I$.
 Let $q$,
$s$.
Then
 $$
\partial^n_{x} f(q)
=\frac{n!}{2 \pi}
\int_{\partial (U\cap L_I)}  (q^2-2s_0q+|s|^2)^{-n-1} (q-\overline{s})^{*(n+1)} ds_I f(s)
$$
\begin{equation}\label{quattordici}
=\frac{n!}{2 \pi}
\int_{\partial (U\cap L_I)}  [S^{-1}(s,q)(q-\overline{s})^{-1}]^{n+1}
(q-\overline{s})^{*(n+1)} ds_I f(s)
\end{equation}
where
\begin{equation}\label{stellina}
(q-\overline{s})^{*n}=\sum_{k=0}^{n}\frac{n!}{(n-k)!k!} q^{n-k}\overline{s}^k,
\end{equation}
is the $n$-th power with respect to the $*$-product. Moreover,
the integral does not depend on $U$ and on  the imaginary
unit $I\in \mathbb{S}$.
\end{proposition}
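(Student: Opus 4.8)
The plan is to differentiate the Cauchy formula (\ref{integral}) with respect to the real variable $x={\rm Re}(q)$ and then to put the $n$-th derivative of the kernel into closed form. Fix $I\in\mathbb{S}$, let $s$ range over the contour $\pp(U\cap L_I)$, and recall that for a regular $f$ one has $\pp^n_x f=\pp^n_s f=\frac{\pp^n f}{\pp x^n}$. For $q$ in a compact neighbourhood of an interior point of $U$ and $s$ on the fixed contour, the kernel $S^{-1}(s,q)=-(q^2-2{\rm Re}(s)q+|s|^2)^{-1}(q-\overline{s})$ and all its $x$-derivatives are continuous and bounded, since the $s$-singularities lie on the sphere $\Sigma_q$, which stays away from the contour. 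Hence differentiation under the integral sign is legitimate and
\[
\pp^n_x f(q)=\frac{1}{2\pi}\int_{\pp(U\cap L_I)}\big(\pp^n_x S^{-1}(s,q)\big)\,ds_I\,f(s),
\]
so the whole statement reduces to computing $\pp^n_x S^{-1}(s,q)$.

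For the closed form I would work in the $*$-algebra of power series in $q$. The key observation is that $\pp_x=\pp_s$ acts on $\sum_n q^na_n$ as the formal derivative $\sum_n nq^{n-1}a_n$, and since the $*$-product is exactly the Cauchy product of the coefficient sequences, $\pp_x$ is a \emph{derivation} for $*$. Now $S^{-1}(s,q)$ is, by construction, the $*$-inverse $(s-q)^{-*}=(R^s_s)^{-1}R^c_s$ of $R_s(q)=s-q$, with $R^c_s=\overline{s}-q$ and $R^s_s=q^2-2{\rm Re}(s)q+|s|^2$; moreover $\pp_x(s-q)=-1$. Differentiating the identity $(s-q)^{-*}*(s-q)=1$ with the Leibniz rule gives $\pp_x(s-q)^{-*}=(s-q)^{-*}*(s-q)^{-*}$, and an easy induction yields $\pp^n_x S^{-1}=n!\,(S^{-1})^{*(n+1)}$. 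It then remains to expand this $*$-power. Since $R^s_s$ has real coefficients it is central for $*$, so
\[
(S^{-1})^{*(n+1)}=(R^s_s)^{-(n+1)}(R^c_s)^{*(n+1)}=(-1)^{n+1}(q^2-2{\rm Re}(s)q+|s|^2)^{-(n+1)}(q-\overline{s})^{*(n+1)},
\]
which coincides with $[S^{-1}(q-\overline{s})^{-1}]^{n+1}(q-\overline{s})^{*(n+1)}$ because $S^{-1}(q-\overline{s})^{-1}=-(q^2-2{\rm Re}(s)q+|s|^2)^{-1}$, the sign $(-1)^{n+1}$ being absorbed in this last factor. Inserting this into the integral yields (\ref{quattordici}), and (\ref{stellina}) records the expansion of the $*$-power. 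Equivalently, one may bypass the derivation formalism and prove the closed form directly by induction, using $\pp_x(q-\overline{s})^{*m}=m(q-\overline{s})^{*(m-1)}$ together with the factorisation $R^s_s=(q-s)*(q-\overline{s})$ and $2(q-{\rm Re}(s))=(q-s)+(q-\overline{s})$, which collapse the inductive step to the recursion $(q-\overline{s})*(q-\overline{s})^{*m}=(q-\overline{s})^{*(m+1)}$.

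For the independence statements I would argue as follows. Each $\pp^n_x S^{-1}(s,q)$ is, as a function of $s$, right regular wherever defined: the right regularity of $S^{-1}$ in $s$ (Proposition \ref{regularboth}) is preserved by $\pp_x$, which differentiates in the independent variable $q$, and the $s$-singularities again lie only on $\Sigma_q$. Hence, applying Corollary \ref{int_nullo} to the region bounded by two admissible contours enclosing the same sphere, the integral is unchanged when $U$ is deformed, giving independence of $U$. Independence of $I$ is then immediate, since the left-hand side $\pp^n_x f(q)$ is intrinsic and does not involve $I$; alternatively it follows from the Representation Formula (Theorem \ref{formula}) exactly as in the proof of (\ref{integral}). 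The main obstacle I anticipate is not analytic but purely a matter of bookkeeping: keeping the noncommutative $*$-products in the correct order and tracking the sign $(-1)^{n+1}$ while passing between the series form $n!\,(S^{-1})^{*(n+1)}$ and the explicit kernels. Once the centrality of $R^s_s$ and the derivation property of $\pp_x$ are in hand, no genuinely hard estimate remains beyond the routine justification of differentiation under the integral sign.
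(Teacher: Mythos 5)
Your proof is correct, and its core computation takes a genuinely different route from the paper's. The paper obtains $\partial^n_x S^{-1}(s,q)$ by direct induction on the explicit kernel: it differentiates $-(q^2-2{\rm Re}(s)q+|s|^2)^{-1}(q-\bar s)$ and, in the inductive step, uses the identity $(2q-2{\rm Re}(s))(q-\bar s)-(q^2-2{\rm Re}(s)q+|s|^2)=q^2-2q\bar s+\bar s^2=(q-\bar s)^{*2}$ together with the fact that $*$-multiplication by real-coefficient factors is pointwise multiplication --- this is essentially the ``direct induction'' you mention as an alternative at the end. Your main argument is instead structural: $\partial_x$ is a derivation for the $*$-product (a coefficient-level check), $S^{-1}$ is the $*$-inverse of $s-q$ with $\partial_x(s-q)=-1$, so Leibniz applied to $(s-q)^{-*}*(s-q)=1$ gives $\partial_x(s-q)^{-*}=\big((s-q)^{-*}\big)^{*2}$, and since $*$-powers of a fixed element commute, $\partial^n_x S^{-1}=n!\,(S^{-1})^{*(n+1)}$ follows by a one-line induction; the closed form then drops out from the centrality of the real-coefficient symmetrization $R^s_s$. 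This buys a conceptual explanation of why the formula mirrors the complex identity $\partial^n_z(s-z)^{-1}=n!\,(s-z)^{-n-1}$, at the modest cost of a formalization step the paper never needs: the $*$-product is defined in the paper only for power series (and used for polynomials), so your argument implicitly requires extending it to the algebra generated by polynomials in $q$ and pointwise inverses of real-coefficient polynomials, and verifying that the Leibniz rule persists there --- routine, but it should be said. Two further points in your favour: you track the sign correctly, since your $\partial^n_x S^{-1}=(-1)^{n+1}n!\,(q^2-2{\rm Re}(s)q+|s|^2)^{-(n+1)}(q-\bar s)^{*(n+1)}$ agrees with the paper's inductive formula and you correctly absorb $(-1)^{n+1}$ into $[S^{-1}(s,q)(q-\bar s)^{-1}]^{n+1}$ (the paper's closing line asserts $S^{-1}(s,q)(q-\bar s)^{-1}=(q^2-2{\rm Re}(s)q+|s|^2)^{-1}$, dropping the minus sign, and the first display of the statement likewise omits the factor $(-1)^{n+1}$, so your bookkeeping is the more consistent one); and you actually argue the independence of the integral from $U$ and $I$ --- via right regularity of $\partial^n_x S^{-1}$ in $s$, Corollary \ref{int_nullo}, and the intrinsic character of the left-hand side --- a claim the proposition states but whose proof the paper omits.
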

\begin{proof}
First of all, we recall that the derivative
coincides, for regular functions,
with the partial derivative with respect to the scalar coordinate $x={\rm Re}(q)$.
Therefore, we can identify
$\partial^n_{x} f(q)=\partial^n_{s} f(q)$.
To compute $\partial^n_{x} f(q)$,it is enough to compute the derivative of
the integrand, since $f$ and its derivatives with respect to $x$ are continuous functions on
$\partial (U\cap L_I)$.
Thus
we get
 $$
\partial^n_{x} f(q)=\frac{1}{2 \pi}
\int_{\partial (U\cap L_I)}  \partial^n_{s}[S^{-1}(s,q)] ds_I f(s).
$$
To prove the statement, it is sufficient to compute
$\partial^n_{x}[S^{-1}(s,q)]$. We proceed  by recurrence.
Consider the derivative $\partial_{x}S^{-1}(s,q)$:
$$
\partial_{x}S^{-1}(s,q)=-(q^2-2{\rm Re}(s)q+|s|^2)^{-2} (2q-2{\rm Re}(s))(q-\bar s)-
(q^2-2{\rm Re}(s)q+|s|^2)^{-1}
$$
$$
=(q^2-2{\rm Re}(s)q+|s|^2)^{-2} [2q^2-2q\overline{s}
-2{\rm Re}(s)q+2{\rm Re}(s)\overline{s}-q^2+2{\rm Re}(s)q-|s|^2]
$$
$$
=(q^2-2{\rm Re}(s)q+|s|^2)^{-2} [q^2-2q\overline{s} +\overline{s}^2]=(q^2-2{\rm Re}(s)q+|s|^2)^{-2}
 (x-\overline{s})^{*2}.
$$
We now assume
$$
\partial^n_{x}S^{-1}(s,q)=(-1)^{n+1} n!  (q^2-2{\rm Re}(s)q+|s|^2)^{-(n+1)}
(q-\overline{s})^{*(n+1)},
$$
and we compute $\partial^{n+1}_{x}S^{-1}(s,q)$. We have:
$$
\partial^{n+1}_{x}S^{-1}(s,q)=\partial_{x}[(-1)^{n+1} n!
 (q^2-2{\rm Re}(s)q+|s|^2)^{-(n+1)} (q-\overline{s})^{*(n+1)}]
$$
$$
=(-1)^{n+2} (n+1)!  (q^2-2{\rm Re}(s)q+|s|^2)^{-(n+2)}(2q-2{\rm Re}(s)) (q-\overline{s})^{*(n+1)}
$$
$$
+(-1)^{n+1} (n+1)!  (q^2-2{\rm Re}(s)q+|s|^2)^{-(n+1)} (q-\overline{s})^{*n}
$$
$$
=(-1)^{n+2} (n+1)!  (q^2-2{\rm Re}(s)q+|s|^2)^{-(n+2)}
$$
$$
\times
[(2q-2{\rm Re}(s))(q-\bar s)-(q^2-2{\rm Re}(s)q+|s|^2)]*(q-\bar s)^{*n},
$$
where we have used the fact that the regular product
coincides with the usual one when the coefficients a real numbers. Therefore
$$
\partial^{n+1}_{x}S^{-1}(s,x)=(-1)^{n+2} (n+1)!
$$
$$
\times
 (q^2-2{\rm Re}(s)q+|s|^2)^{-(n+2)}[q^2-2q\bar s+\bar s^2]*(q-\bar s)^{*n}.
$$
The last equality in (\ref{quattordici}) depends on the fact
that $S^{-1}(s,q)(q-\overline{s})^{-1}=(q^2-2{\rm Re}(s)q+|s|^2)^{-1}$.
\end{proof}

\begin{remark}
{\rm
The proposition above provides an alternative way to prove that, on circular s-domains,
a regular function $f$ is infinitely differentiable, see Corollary \ref{hartogs}.}
\end{remark}
Provided the importance of the regular Cauchy kernel that we have introduced,
we conclude the paper by presenting
two different explicit series expansions.

\begin{theorem}\label{bounSres}
Let $q$ and $s=u+vI$ ($I\in\mathbb{S}$, $v>0$) be two quaternions such that
\begin{equation}\label{Imessequat}
v<| q-u|.
\end{equation}
Then the noncommutative Cauchy kernel can be represented by the series
\begin{equation}\label{chiusoSquat}
S^{-1}(s,q)=\sum_{n\geq 0}(q-u)^{-n-1}(vI)^n
\end{equation}

\end{theorem}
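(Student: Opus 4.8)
The plan is to derive (\ref{chiusoSquat}) directly from the closed form (\ref{esse-1}) of the Cauchy kernel function, by expanding it into a quaternionic geometric series whose region of convergence turns out to be exactly the set cut out by (\ref{Imessequat}). First I would specialize (\ref{esse-1}) to $s=u+vI$. Since ${\rm Re}(s)=u$, $|s|^2=u^2+v^2$ and $\bar s=u-vI$, and since the real number $u$ commutes with $q$, the quadratic term collapses to $q^2-2uq+u^2+v^2=(q-u)^2+v^2$, while $q-\bar s=(q-u)+vI$. Writing $p=q-u$ for brevity, (\ref{esse-1}) becomes
$$
S^{-1}(s,q)=-\big(p^2+v^2\big)^{-1}\big(p+vI\big).
$$
The structural fact that makes everything work is that $p^2+v^2$ is a polynomial in $p$ with real coefficients, so it commutes with every power of $p$ and with $p^{-1}$.

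Next I would expand the first factor as a geometric series. Writing $p^2+v^2=p^2\big(1+v^2p^{-2}\big)$ and using that $v^2$ is a real scalar, one obtains
$$
\big(p^2+v^2\big)^{-1}=\sum_{k\geq 0}(-1)^k v^{2k}\,p^{-2k-2},
$$
a series that converges absolutely exactly when $v^2/|p|^2<1$, i.e. when $v<|q-u|$, which is precisely hypothesis (\ref{Imessequat}). Multiplying on the right by $p+vI$ and distributing splits the sum into a family of terms carrying the powers $p^{-2k-1}$ and a family carrying $p^{-2k-2}(vI)$. Using the identities $(vI)^{2k}=(-1)^k v^{2k}$ and $(vI)^{2k+1}=(-1)^k v^{2k}(vI)$ to reabsorb the real scalars, these two families coalesce into the single series $\sum_{n\geq 0}p^{-n-1}(vI)^n$, which is the right-hand side of (\ref{chiusoSquat}).

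The step requiring the most care — and the main obstacle — is the noncommutative bookkeeping in this rearrangement. Because $vI$ does not commute with $p^{-1}$ when $q\notin L_I$, the factor $vI$ must be kept strictly on the right throughout, and the only quantities one may legitimately move past powers of $p^{-1}$ are the real scalars $(-1)^k v^{2k}$; the regrouping of even and odd terms into the powers $(vI)^n$ is then justified by the absolute and locally uniform convergence of the series on $\{v<|q-u|\}$, which is what permits the reordering. A convenient way both to organize the bookkeeping and to fix the overall normalization is to observe that the series $T=\sum_{n\geq 0}p^{-n-1}(vI)^n$ satisfies the two-sided relation $pT-T(vI)=1$ (shift the summation index by one), so $T$ is pinned down by this linear relation; comparing it with the relation satisfied by $-\big(p^2+v^2\big)^{-1}\big(p+vI\big)$, or equivalently checking agreement after restricting to the plane $L_I$ — where $p$ and $vI$ commute and the sum collapses to a single classical complex geometric series — identifies the closed form (\ref{esse-1}) with (\ref{chiusoSquat}) and completes the argument.
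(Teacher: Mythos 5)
Your proposal follows the paper's own proof essentially step for step: the paper also specializes the closed form (\ref{esse-1}) at $s=u+vI$, expands
$$
\big((q-u)^2+v^2\big)^{-1}=\sum_{n\geq 0}(-1)^n v^{2n}(q-u)^{-2n-2},
$$
multiplies this expansion on the right by $q-\bar s=(q-u)+vI$, and recombines the even and odd families of terms through the identity $(-1)^n v^{2n}=(vI)^{2n}$. Your remarks on absolute convergence for $v<|q-u|$ and on keeping $vI$ strictly to the right of the powers of $(q-u)^{-1}$ make explicit exactly what the paper leaves tacit, so in route and in substance the two arguments coincide.

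There is one concrete point where your computation slips, and the paper's proof slips in the identical place: the leading minus sign. You correctly record $S^{-1}(s,q)=-(p^2+v^2)^{-1}(p+vI)$ with $p=q-u$, but the coalescence step then delivers
$$
S^{-1}(s,q)=-\sum_{n\geq 0}(q-u)^{-n-1}(vI)^n,
$$
that is, (\ref{chiusoSquat}) only up to sign; the paper commits the same slip by writing $S^{-1}(s,q)$ for the product of the expansion with $(q-u+vI)$, forgetting the minus in (\ref{esse-1}). Notably, your own proposed cross-checks would detect this. On $L_I$, where everything commutes, the series $T=\sum_{n\geq 0}p^{-n-1}(vI)^n$ sums to $(q-u-vI)^{-1}=(q-s)^{-1}$, whereas the kernel satisfies $S^{-1}(s,q)=(s-q)^{-1}$ there. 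Equivalently, $T$ solves $pT-T(vI)=1$, while $A=-(p^2+v^2)^{-1}(p+vI)$ solves $pA-A(vI)=-1$ (use that $p^2+v^2$ commutes with $p$); since (\ref{Imessequat}) forces $q\notin u+v\mathbb{S}$, each of these relations has a unique solution, whence $A=-T$ rather than $A=T$. So what you (and the paper) actually prove is (\ref{chiusoSquat}) with a minus sign on the right-hand side: with that emendation, or with the opposite sign convention for the kernel, the argument is complete; without it, no argument can close the gap, because the stated equality already fails on the plane $L_I$.
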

\begin{proof}
Consider the equalities
$$
(q^2-2q u+u^2+v^2)^{-1}
$$
$$
=
\Big( (q- u)^2+v^2\Big)^{-1}
$$
$$
=
\Big( (q- u)^2 (1+v^2(q- u)^{-2})\Big)^{-1}
$$
$$
=
  \left(1+v^2(q- u)^{-2}\right)^{-1}(q- u)^{-2}
$$
$$
=\sum_{n\geq 0}(-1)^n v^{2n}(q- u)^{-2n-2}.
$$
We now multiply the last expression by $(q-u+vI)$ on the right hand side and obtain:
$$
S^{-1}(s,q)=
\sum_{n\geq 0}(-1)^{n}
v^{2n}(q- u)^{-2n-2}
(q- u+vI)
$$
$$
= \sum_{n\geq 0}(-1)^{n}
v^{2n}(q-u)^{-2n-1}+
\sum_{n\geq 0}(-1)^{n}
v^{2n}(q- u)^{-2n-2}
vI
$$
Since $(-1)^nv^{2n}=(vI)^{2n}$ is a real number we obtain
$$
=
\sum_{n\geq 0}(q-u)^{-2n-1}(vI)^{2n}+
\sum_{n\geq 0}(q-u)^{-2n-2}(vI)^{2n+1}
$$
from which the statement follows.
\end{proof}
To conclude,
we now examine what happens on the complement of the closure of the domain in which
the series above  converges. We will adopt
a Laurent type approach:
\begin{theorem} Let $q$ and $s=u+vI$ ($I\in\mathbb{S}$, $v>0$) be two quaternions  such that
\begin{equation}\label{stiaa}
|q-u  |<v.
\end{equation}
Then the $S$-resolvent operator admits the series expansion:
\begin{equation}\label{stiaaaa}
S^{-1}(s,q)=\sum_{n\geq 0} (q-u)^{n}
(vI)^{-n-1}.
\end{equation}
\end{theorem}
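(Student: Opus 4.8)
The plan is to mimic the argument of Theorem \ref{bounSres}, but to carry out a Laurent-type expansion: instead of factoring $(q-u)^2$ out of the quadratic term, I would factor out $v^2$, which is the natural move in the regime $|q-u|<v$. Starting from the closed form (\ref{esse-1}) and recalling that for $s=u+vI$ one has ${\rm Re}(s)=u$, $|s|^2=u^2+v^2$ and $\overline{s}=u-vI$, I first rewrite the quadratic factor as
\[
q^2-2q\,{\rm Re}(s)+|s|^2=(q-u)^2+v^2
\]
(using that $u\in\rr$ commutes with $q$), and the numerator as $q-\overline{s}=(q-u)+vI$.

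Next I would invert the quadratic term by pulling out $v^2$ rather than $(q-u)^2$. Since $v$ is real and hence central,
\[
\big((q-u)^2+v^2\big)^{-1}=v^{-2}\big(1+v^{-2}(q-u)^2\big)^{-1}=\sum_{n\geq 0}(-1)^n v^{-2n-2}(q-u)^{2n},
\]
and this geometric series converges precisely when $|v^{-2}(q-u)^2|<1$, i.e.\ under the hypothesis (\ref{stiaa}). Multiplying on the right by $-(q-\overline{s})=-\big((q-u)+vI\big)$ and distributing yields two series,
\[
S^{-1}(s,q)=-\sum_{n\geq 0}(-1)^n v^{-2n-2}(q-u)^{2n+1}-\sum_{n\geq 0}(-1)^n v^{-2n-1}(q-u)^{2n}I,
\]
where I keep the factor $I$ on the right throughout, since $q$ and $I$ need not commute.

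It then remains to recombine these into the single series (\ref{stiaaaa}). The key algebraic fact is $I^{-1}=-I$, whence $(vI)^{-m}=(-1)^m v^{-m}I^{m}$; in particular $(vI)^{-2m-2}=(-1)^{m+1}v^{-2m-2}$ is real, while $(vI)^{-2m-1}=(-1)^{m+1}v^{-2m-1}I$. Substituting these identities shows that the first series above is exactly the sum over the odd indices $n=2m+1$ of $(q-u)^n(vI)^{-n-1}$, and that the second is the sum over the even indices $n=2m$; interleaving the two gives $\sum_{n\geq 0}(q-u)^n(vI)^{-n-1}$, as claimed.

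I expect the genuinely delicate points to be bookkeeping rather than conceptual. One must respect the order of the noncommuting factors $q-u$ and $I$ at every step, so that the powers of $vI$ end up on the correct side, and one must justify that the two series may be legitimately merged and reindexed into one. Absolute convergence of both series for $|q-u|<v$ makes the rearrangement valid, so no essential obstacle arises; the heart of the matter is simply the substitution $(vI)^{-m}=(-1)^m v^{-m}I^m$ that converts the mixed powers of $v$ and $I$ into pure powers of $vI$.
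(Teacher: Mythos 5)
Your proof is correct and follows essentially the same route as the paper's: rewrite the kernel using $(q-u)^2+v^2$ and $q-\overline{s}=(q-u)+vI$, factor out the central quantity $v^2$, expand the geometric series (convergent exactly when $|q-u|<v$), multiply on the right by the linear factor, and convert the mixed powers of $v$ and $I$ into powers of $vI$ via $(vI)^{2n}=(-1)^nv^{2n}$ before interleaving even and odd indices. You are in fact slightly more careful than the printed proof, which drops the overall minus sign of $S^{-1}(s,q)=-\bigl(q^2-2q\,{\rm Re}(s)+|s|^2\bigr)^{-1}(q-\overline{s})$ at the multiplication step (so its final display is off by a sign from the claimed expansion), whereas your bookkeeping carries the sign through and lands exactly on the stated series.
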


\begin{proof}
We have the following equalities:
$$
(q^2-2qu+u^2+v^2)^{-1}
=[(q-u)^2+v^2]^{-1}= v^{-2}[1+(q-u)^2v^{-2}]^{-1}
$$
$$
=v^{-2}[\sum_{n\geq 0}(-1)^n(q-u)^{2n}v^{-2n}].
$$
By multiplying the last equality by $(q-u+vI)$ on the right hand side, we obtain:
$$
S^{-1}(s,q)=
\sum_{n\geq 0}(-1)^n(q-u)^{2n+1}v^{-2n-2}+
\sum_{n\geq 0}(-1)^n(q-u)^{2n}v^{-2n-2}(vI).
$$
Recalling that $(-1)^nv^{2n}=(vI)^{2n}$ is a real number we get:
$$
S^{-1}(s,q)=
\sum_{n\geq 0}-(q-u)^{2n+1}(vI)^{-2n-2}+
\sum_{n\geq 0}-(q-u)^{2n}(vI)^{-2n-1}
$$
which is our statement. The series converges when
(\ref{stiaa}) holds.
\end{proof}


\begin{thebibliography}{99}


\bibitem{cgssann} F. Colombo, G. Gentili, I. Sabadini, D.C. Struppa,
{\em A functional calculus in a  non commutative  setting},
 Electron. Res. Announc. Math. Sci, {\bf 14} (2007), 60-68.

\bibitem{cgss} F. Colombo, G. Gentili, I. Sabadini, D.C. Struppa, {\em
Non commutative functional calculus:  bounded operators}, preprint, 2007.



\bibitem{cs} F. Colombo, I. Sabadini, {\em The Cauchy formula
with $s$-monogenic kernel and  a functional calculus for noncommuting operators}, preprint, 2008.

\bibitem{slicecss} F. Colombo, I. Sabadini, D.C. Struppa, {\em
 Slice monogenic functions}, to appear in Israel Journal of Mathematics.

\bibitem{functionalcss} F. Colombo, I. Sabadini, D.C. Struppa, {\em
 A new functional calculus for noncommuting operators},
 J. Funct. Anal., {\bf 254} (2008), 2255--2274.



\bibitem{csss} F. Colombo, I. Sabadini, F. Sommen, D.C.
Struppa, {\em Analysis of Dirac Systems and Computational Algebra},
Progress in Mathematical Physics, Vol. 39, {Birkh\"auser}, Boston,
2004.


\bibitem{fueter 1} R. Fueter, {\em Die Funktionentheorie der
Differentialgleichungen $\bigtriangleup u = 0$ und $\bigtriangleup \bigtriangleup u = 0$
mit vier reellen Variablen}, Comm. Math. Helv. {\bf 7} (1934),
307--330.

\bibitem{fueter 2} R. Fueter, {\em \"{U}ber eine Hartogs'schen Satz},
Comment. Math. Helv., {\bf 12} (1939/40), 75--80.

\bibitem{caterina} G. Gentili, C. Stoppato, {\em Zeros of regular functions and polynomials of
a quaternionic variable}, to appear in Michigan Math. J.

\bibitem{open} G. Gentili, C. Stoppato, {\em The open mapping
theorem for quaternionic regular functions}, E-print
arXiv:0802.3861v1 [math.CV].

\bibitem{gs} G. Gentili, D.C. Struppa, {\em A new approach to Cullen-regular functions
of a quaternionic variable},
 C.R. Acad. Sci. Paris, {\bf 342} (2006), 741--744.

\bibitem{advances} G. Gentili, D.C. Struppa, {\em A new theory of regular functions
of a quaternionic variable}, Adv. Math. {\bf 216} (2007),  279--301.

\bibitem{gscliffor} G. Gentili, D.C. Struppa, {\em Regular functions on a Clifford
Algebra},  Complex Var. Elliptic Equ. {\bf 53} (2008), 475--483.

\bibitem{lam} T.Y. Lam, \textit{A first course in noncommutative
rings}. Graduate Texts in Mathematics, 123. Springer-Verlag, New
York, 1991, 261--263.

\bibitem{ps} A. Pogorui, M.V. Shapiro, On the
structure of the set of zeros of quaternionic polynomials, Complex
Variables {\bf 49}, (2004) no. 6, 379-389.

\bibitem{serodio zeri} R. Ser\^{o}dio, L.-S. Siu, Zeros of
quaternion polynomials. \textit{Appl. Math. Letters} {\bf 14}
(2001), 237--239.
\end{thebibliography}
\end{document}